\documentclass[reqno,10pt,twoside]{article}
\usepackage{fullpage}
\usepackage{amssymb}
\usepackage{amsmath}
\usepackage{amsthm}
\numberwithin{equation}{section}
\newtheorem{theorem}{Theorem}

\newtheorem{remark}{Remark}

\newtheorem{lemma}{Lemma}
\newcommand{\Nzero}{\mathbb N_0}
\newcommand{\card}[1]{\left|#1\right|}

\begin{document}

\title{\Large On strongly minimal asymptotic bases of order $h$ generated by powers of $2$\thanks{This work is supported by the National Natural Science Foundation of China (Grant No. 12301003), the Anhui Provincial Natural Science Foundation (Grant No. 2308085QA02) and the University Natural Science Research Project of Anhui Province (Grant No. 2022AH050171).}}
\author{\large Shi-Qiang Chen\thanks{Corresponding author. E-mail addresses: csq20180327@163.com (S.-Q. Chen), 2663398161@qq.com (X.-Y. Yan).}\quad and\quad Xin-Yue Yan\\[1.5em]
\normalsize School of Mathematics and Statistics\\[1em]
\normalsize Anhui Normal University, Wuhu 241002, PR China}
\date{}
\maketitle
\vspace{-1.5em}

{\bf Abstract.} Let $h\geq2$ be an integer and let
$\Nzero=W_1\cup\cdots\cup W_h$ be a partition into infinite sets.
For $W\subseteq\Nzero$, let $A(W)$ denote the set of all finite nonempty
sums of distinct powers $2^w$ with $w\in W$. We give a
necessary and sufficient condition for
$A=A(W_1)\cup\cdots\cup A(W_h)$ to be a strongly minimal asymptotic
basis of order $h$. This provides a complete answer to a problem posed by Nathanson in 1988.

{\bf Keywords:} asymptotic basis; strongly minimal asymptotic basis;
minimal asymptotic basis; binary representation.

2020 {\it Mathematics Subject Classification}: 11B13.

\section{Introduction}
Let $\Nzero$ denote the set of nonnegative integers. For $A\subseteq\Nzero$,
a real number $x\geq1$, and an integer $h\geq2$, put
\[
 A(x)=|A\cap[1,x]|,
 \qquad
 hA=\{a_1+\cdots+a_h:a_1,\ldots,a_h\in A\}.
\]
We call $A$ an asymptotic basis of order $h$ if $hA$ contains every
sufficiently large integer. An asymptotic basis $A$ of order $h$ is
minimal if no proper subset of $A$ is an asymptotic basis of order $h$.
For $a\in A$, put
\[
 E_a=hA\setminus h(A\setminus\{a\}),
 \qquad E_a(x)=|E_a\cap[1,x]|.
\]
Following Nathanson~\cite{N}, an asymptotic basis $A$ of order $h$ is
called \emph{strongly minimal} if, for every $a\in A$, there exists a
constant $c_a>0$ such that
\(E_a(x)\geq c_aA(x)^{h-1}\)
for all sufficiently large $x$. 

For a nonempty set $W\subseteq\Nzero$, let
\[
 \mathcal F^*(W)=\{F\subseteq W:0<|F|<\infty\},
 \qquad
 A(W)=\left\{\sum_{w\in F}2^w:F\in\mathcal F^*(W)\right\}.
\]
In 1988, Nathanson \cite{N} proved that, for
\[
 W_i=\{w\in\Nzero:w\equiv i-1\pmod h\}\qquad(1\leq i\leq h),
\]
the set $A=A(W_1)\cup\cdots\cup A(W_h)$ is a strongly minimal
asymptotic basis of order $h$. Nathanson~\cite{N} also posed the following problem.\\
{\bf Nathanson's Problem.} Let $\Nzero=W_1\cup\cdots\cup W_h$ be a partition. Under what
conditions is an asymptotic basis of the form
\[
 A=A(W_1)\cup\cdots\cup A(W_h)
\]
strongly minimal?

For $W\subseteq\Nzero$, write $W(x)=|W\cap[0,x)|$ for $x\geq0$.

In 2011, Chen and Chen~\cite{CC} established the following two results.\\
{\bf Theorem A.} Let $\Nzero=W_1\cup W_2$ be a partition with $0\in W_1$ such
that $W_1$ and $W_2$ are infinite. Then
$A=A(W_1)\cup A(W_2)$ is a minimal asymptotic basis of order $2$ if
and only if either $W_1$ contains no consecutive integers or $W_2$
contains consecutive integers or both.\\
{\bf Theorem B.} Let $\Nzero=W_1\cup W_2$ be a partition such that $W_1$ and
$W_2$ are infinite. Suppose that
$A=A(W_1)\cup A(W_2)$ is a minimal asymptotic basis of order $2$.
Then $A$ is strongly minimal if and only if $W_i(x)=(1/2)x+O(1)$ for $i=1,2$.

Consequently, they resolved Nathanson's problem for $h=2$.

In 2025, Chen~\cite{Chen2025} further investigated Nathanson's problem
and obtained the following result.\\
{\bf Theorem C.} Let $h\geq2$ be an integer, and let $r$ be the least
integer satisfying $r>\log h/\log 2$. Let
$\Nzero=W_1\sqcup\cdots\sqcup W_h$ be a partition into infinite sets
such that each $W_i$ contains $r$ consecutive integers and
$W_i(x)=x/h+O(1)$ for $1\leq i\leq h$. Then
$A=A(W_1)\cup\cdots\cup A(W_h)$ is a strongly minimal asymptotic basis
of order $h$.

For further related results on minimal asymptotic bases and binary
or $g$-adic constructions, see [3-13, 15-17].

Throughout the paper, fix an integer $h\geq2$ and a partition
\[
 \Nzero=W_1\sqcup\cdots\sqcup W_h
\]
with $0\in W_1$, where all the $W_i$ are infinite, and put
\[
 A_i=A(W_i),\qquad B_i=A(W_i)\cup\{0\}\qquad(1\leq i\leq h),
 \qquad A=\bigcup_{i=1}^h A_i.
\]
By Lemma~\ref{lema1}, $A$ is an asymptotic basis of
order $h$. For integers $m\geq0$ and $1\leq i\leq h$, put
\[
 W_i(m)=|W_i\cap[0,m)|,
 \qquad P_i(m)=\sum_{w\in W_i\cap[0,m)}2^w.
\]
Then
\begin{equation}\label{eq2}
 \sum_{i=1}^h W_i(m)=m,
 \qquad \sum_{i=1}^hP_i(m)=2^m-1.
\end{equation}
Let
\[
 \mathcal K_h=
 \left\{(k_1,\ldots,k_h)\in\mathbb Z_{\geq0}^h:
 k_1+\cdots+k_h=h\right\}.
\]
We answer Nathanson's problem for every $h\geq2$ by establishing the following necessary and sufficient condition.

\begin{theorem} \label{thm1} The set $A$ is a strongly minimal asymptotic basis of order $h$ if
and only if the following two conditions hold:

\begin{enumerate}
\item[(I)] For every $i\in\{1,\ldots,h\}$,
\[
 W_i(m)=\frac{m}{h}+O(1).
\]

\item[(II)] For every $i\in\{1,\ldots,h\}$, every $w\in W_i$, and every
$(k_1,\ldots,k_h)\in\mathcal K_h\setminus\{(1,\ldots,1)\}$, there exists
an integer $t\geq1$ such that
\begin{equation}\label{eq3}
 \sum_{j=1}^h k_jP_j(t)
 <
 \begin{cases}
 \displaystyle\sum_{j\neq i}P_j(t),&t\leq w,\\[2mm]
 \displaystyle\sum_{j\neq i}P_j(t)+2^w,&t>w.
 \end{cases}
\end{equation}
\end{enumerate}
\end{theorem}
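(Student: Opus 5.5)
The plan is to prove the two directions separately. Throughout, for $a\in A_i$ I will use binary digits and compare $E_a(x)$ with $A(x)^{h-1}$. Write $m=\lfloor\log_2 x\rfloor$. Then $A(x)\asymp\sum_j 2^{W_j(m)}\asymp 2^{\max_jW_j(m)}$. Also, every sum $a+b_2+\cdots+b_h$ with $b_j\in B_j$, $j\neq i$, is the number whose binary digits are given by the disjoint union of the supports. So the number of such sums below $x$ that have $a$ fixed is about $\prod_{j\neq i}2^{W_j(m)}=2^{m-W_i(m)}$, using \eqref{eq2}.

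\emph{Necessity of (I).} Every element of $E_a$ has a representation $a+a_2+\cdots+a_h$. I will first show that all but $O(2^{m-W_i(m)})$ such representations, with $a$ of the relevant type, are the ``one element from each $A_j$'' sums; the other patterns are absorbed via Lemma~\ref{lema1}-type rearrangements. This gives $E_a(x)\ll 2^{m-W_i(m)}$ for $a\in A_i$. Strong minimality then forces $m-W_i(m)\geq (h-1)\max_jW_j(m)-O(1)$ for every $i$. Summing the relation $W_i(m)\leq m-(h-1)\max_j W_j(m)+O(1)$ over $i$ gives $h\max_jW_j(m)\le m+O(1)$. Together with $\sum_j W_j(m)=m$, this yields $W_i(m)=m/h+O(1)$.

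\emph{Necessity of (II).} Suppose (II) fails for some $i$, $w\in W_i$ and $k\in\mathcal K_h\setminus\{(1,\dots,1)\}$. Then the reverse inequality holds for every $t\geq1$. I will use this to show that for $a=2^w\in A_i$ almost every $n\in hA$ containing $a$ in a representation has a second representation in $h(A\setminus\{a\})$. Such an $n$ is $2^w+\sum_{j\ne i}b_j$. Take $t$ to be a suitable cut level, namely the first position above which the digits of $n$ are not ``full''. The reverse inequality $\sum_j k_jP_j(t)\geq\sum_{j\neq i}P_j(t)\,(+2^w)$ then says the low block of $n$ can be rewritten using $k_j$ summands from $A_j$, with the extra summands supplied from the high part. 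Consequently $E_a$ is sparse, so $E_a(x)=o(A(x)^{h-1})$, contradicting strong minimality.

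\emph{Sufficiency.} Fix $a\in A_i$ and let $w$ be its least binary digit. I will exhibit many elements of $E_a$. Take $n=a+\sum_{j\neq i}b_j$, where $b_j\in A_j$ has its digits below level $t$ equal to all of $W_j\cap[0,t)$, with $t$ the integer supplied by (II), and has arbitrary digits of $W_j$ in a window $[t',m)$. The number of such $n$ is $\gg 2^{m-W_i(m)}\gg A(x)^{h-1}$ by (I). Suppose $n=a'_1+\cdots+a'_h$ with all $a'_\ell\in A\setminus\{a\}$, and let $k_j$ be the number of $a'_\ell$ lying in $A_j$. If $k\neq(1,\dots,1)$, then reducing modulo $2^t$ shows the low parts of the $a'_\ell$ sum to at most $\sum_jk_jP_j(t)$. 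On the other hand they must produce the low block of $n$, which equals the right side of \eqref{eq3}. No carries into the low block are possible, so this contradicts \eqref{eq3}. If $k=(1,\dots,1)$, then disjointness of the digit sets $W_j$ gives a carry-free sum, so each $a'_\ell$ is determined digitwise and the summand from $A_i$ must equal $a$, which is excluded.

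The main obstacle is making this carry analysis rigorous. One must handle possible carries from the low block into positions $\geq t$ in the alternative representation. One must also choose $t$ and the high window so that the high digits cannot compensate for the low deficit. I would first try placing a gap of $\log_2 h+O(1)$ empty positions above $t$, which (I) allows with bounded loss.
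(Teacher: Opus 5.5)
Your sufficiency argument is essentially the paper's. The necessity of (II), however, has a genuine gap.

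\emph{Why the cut-level plan for (II) fails.} Failure of (II) at all levels $r\le t$ (with $t>w$) gives, by the digit-capacity criterion of Lemma~4, only that the full block $N_t=2^w+\sum_{j\ne i}P_j(t)$ lies in $\sum_j k_jB_j$. Its summands may be $0$, and a $B_i$-summand may be exactly $2^w=a$. To glue on the high part $n-(n\bmod 2^t)=\sum_{j\ne i}\beta_j$, you need a $B_j$-summand to absorb each nonzero $\beta_j$. But $k\ne(1,\dots,1)$ with $\sum_jk_j=h$ forces $k_j=0$ for some $j$, and when $k_i\ge1$ this $j$ differs from $i$. Then $\beta_j$ has no slot and you would need $h+1$ summands: the high part consumes slots rather than supplying them. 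Moreover, a prefix cut gives no decay. The probability that the full prefix of a candidate reaches level $t$ is $2^{-(t-W_i(t))}$, so typically the cut sits at bounded height and everything above it must go through the failing gluing step. What is needed is a full block \emph{somewhere} in $(w,m)$, not a full prefix.

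\emph{The missing ideas.} Since the reverse inequalities hold at every level, Lemma~4 gives $N_m\in\sum_jk_jB_j$ for all $m>w$. Lemma~3 (closure under sub-supports) then puts every $n<2^m$ with $S(n)\cap W_i=\{w\}$ into $\sum_jk_jB_j$ as a whole number, so nothing needs gluing. Zero summands and copies of $a$ are removed using any block $I$ of fixed length $L$ above $w$ on which $n$ is full (digits $0$ on $W_i\cap I$, $1$ on $I\setminus W_i$). There, the carry recursion $d_t+c_t=\varepsilon_t(n)+2c_{t+1}$ with $0\le c_t\le h-1$, together with $|W_j\cap I|=L/h+O(1)$, excludes $k_i=1$. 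It also forces one summand (from $B_i$ if $k_i\ge2$) to have at least $h$ digits in $I$. Peeling these digits off turns zero summands into powers $2^v$ and each copy of $a$ into $a+2^v$ (Lemmas~8 and~9). With $q\asymp m/L$ disjoint blocks, the proportion of candidates with no full block is at most $(1-2^{-L})^q\to0$. Note that this step uses (I), which your plan for (II) never invokes.

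\emph{Two smaller points.} In the necessity of (I), your skeleton matches Lemma~7. However, the bound on elements of $E_a$ with $S(n)\cap W_i\ne S(a)$ is the real content (Lemma~6), and it is not a matter of rearrangement alone. Rearrangement only shows that any other $A_i$-summand has fewer than $h$ digits above $\max S(a)$, so such patterns number $m^{O(1)}2^{(h-2)K(m)}$. Patterns with all other summands in $\bigcup_{j\ne i}A_j$ and some multiplicity $r_j\ge2$ must be counted, via $\prod_{j\ne i}(r_j+1)^{W_j(m)}\le 2^{(h-1-\delta)K(m)}$ with $\delta=2-\log_2 3$. This bound can exceed $2^{m-W_i(m)}$, so your announced $O(2^{m-W_i(m)})$ is not what the method yields; $o(2^{(h-1)K(m)})$ is what is proved, and it suffices.

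In the sufficiency, make the low block full up to $\max_k t(k)$ over the finitely many $k$, and keep each $b_j\ne0$. Your carry worry is unfounded. The quantity $X=\sum_\ell(a'_\ell\bmod 2^t)$ is nonnegative and congruent to $n$ modulo $2^t$, hence $X\ge n\bmod 2^t$ whatever carries leave the low block. High digits cannot affect $n\bmod 2^t$, so no buffer above $t$ is needed.
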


\begin{remark}\label{rem1}
For $h=2$, Theorems A and B imply that condition~\emph{(II)}
can be replaced by the requirement that $A$ be a minimal asymptotic
basis of order $2$, provided that condition~\emph{(I)} is retained.
\end{remark}

\section{Lemmas}

\begin{lemma}(See \cite[Lemma 1(c)]{N}.)\label{lema1} Under the fixed notation of the Introduction,
\[
 A=A(W_1)\cup\cdots\cup A(W_h)
\]
is an asymptotic basis of order $h$.
\end{lemma}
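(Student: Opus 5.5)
The plan is to prove the stronger statement that every integer $n\geq h$ lies in $hA$. The starting point is the binary expansion $n=\sum_{w\in F}2^w$ with $F\subseteq\Nzero$ finite and nonempty, and the case split is on $s=|F|$ compared with $h$. Two observations are used throughout. First, every power of $2$ lies in $A$: since $\Nzero=W_1\sqcup\cdots\sqcup W_h$, each $w$ belongs to some $W_i$, and then $2^w\in A(W_i)$ via $F=\{w\}$. Second, any finite nonempty subset $G\subseteq W_i$ gives $\sum_{w\in G}2^w\in A_i\subseteq A$.

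\emph{Case $s\geq h$.} Partition $F$ as $F=\bigsqcup_i (F\cap W_i)$, and discard the empty pieces. This leaves $m\leq h$ nonempty pieces, each contained in a single $W_i$. The total number of elements is $s\geq h$. So while there are fewer than $h$ pieces, some piece has at least two elements, and we split it into two nonempty subsets; both still lie in the same $W_i$. Each split raises the number of pieces by exactly one and keeps every piece nonempty. We stop when there are exactly $h$ pieces $G_1,\dots,G_h$. By the second observation each $\sum_{w\in G_k}2^w$ lies in $A$, so $n$ is the sum of these $h$ elements and $n\in hA$.

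\emph{Case $s<h$.} Here we work with a multiset of powers of $2$ summing to $n$, starting from the binary digits, which gives $s$ terms. As long as the multiset has fewer than $h$ terms, some term exceeds $1$: otherwise all terms equal $1$, and then $n$ equals the number of terms, which is less than $h$, contradicting $n\geq h$. We replace such a term $2^w$ with $w\geq1$ by $2^{w-1}+2^{w-1}$. Repetitions are allowed in $hA$, since the $a_j$ need not be distinct, so collisions with existing digits cause no trouble. Each step raises the count by one, so after $h-s$ steps $n$ is a sum of exactly $h$ powers of $2$. Each of these lies in $A$ by the first observation, hence $n\in hA$.

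The only genuinely delicate point is that $0\notin A$, because $A(W)$ consists of \emph{nonempty} sums. This means we cannot pad a representation with zeros, and the number of summands must be exactly $h$. The splitting argument in the first case handles this when there are too many digits, and the halving argument in the second case handles it when there are too few; the hypothesis $n\geq h$ is exactly what makes the halving possible. Combining the two cases gives $\{n:n\geq h\}\subseteq hA$, so $A$ is an asymptotic basis of order $h$.
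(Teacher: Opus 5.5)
Your proof is correct and complete. The paper gives no proof of this lemma; it only quotes it from Nathanson, so your argument serves as a self-contained substitute rather than something to compare step by step.

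Your argument gives slightly more than the lemma asks for. Every element of $A$ is a nonempty sum of powers of $2$, hence at least $1$, so no $n<h$ lies in $hA$. Your two cases therefore show that $hA=\{n\in\Nzero:n\geq h\}$ exactly. You also use only that $W_1,\ldots,W_h$ are pairwise disjoint and cover $\Nzero$; neither the infinitude of the $W_i$ nor $0\in W_1$ is needed.

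The grouping of binary digits by the classes $W_i$ in your case $s\geq h$ is the same mechanism the paper relies on elsewhere, for instance in \eqref{eq5} and in the decomposition $n_F=a+\sum_{j\neq i}b_j$ in the sufficiency proof. Your refinement step (when $s\geq h$) and halving step (when $s<h$) address the one genuine issue, which you correctly identify: $0\notin A$, so a representation cannot be padded with zeros and must have exactly $h$ summands.
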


\begin{lemma}(See \cite[Lemma 2]{N}.)\label{lema2} Let $T,s\geq1$ and $x_1,\ldots,x_T,y_1,\ldots,y_s\in\Nzero$ satisfy
\[
 \sum_{\rho=1}^{T}2^{x_\rho}
 =
 \sum_{r=1}^{s}2^{y_r},
 \qquad
 y_1<\cdots<y_s.
\]
The $x_\rho$ need not be distinct. Then there exists a partition
\[
 \{1,\ldots,T\}=J_1\sqcup\cdots\sqcup J_s
\]
such that
\[
 2^{y_r}=\sum_{\rho\in J_r}2^{x_\rho}
 \qquad(1\leq r\leq s).
\]
\end{lemma}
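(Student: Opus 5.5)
We prove Lemma~\ref{lema2} by strong induction on $T$, using the binary structure of the sum through a carrying argument. Write $N=\sum_{\rho=1}^T 2^{x_\rho}=\sum_{r=1}^s 2^{y_r}$.

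First we record two observations.
\begin{itemize}
\item[(a)] Every $x_\rho\le y_s$. Indeed, $2^{x_\rho}\le N<2^{y_s+1}$.
\item[(b)] If all $x_\rho$ are distinct, then both sides are binary expansions of $N$. By uniqueness, $T=s$ and $\{x_\rho\}=\{y_r\}$, so we take each $J_r$ to be the singleton $\{\rho\}$ with $x_\rho=y_r$.
\end{itemize}
This settles the base case $T=1$, and more generally every case with no repeated exponents.

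For the induction step, suppose some exponent repeats. Let $x_\alpha=x_\beta=e$ with $\alpha\ne\beta$. Form a new multiset of $T-1$ exponents by deleting $x_\alpha$ and $x_\beta$ and adding one new exponent $e+1$, indexed by a fresh symbol $\gamma$. The total sum is unchanged, since $2^e+2^e=2^{e+1}$. Because $T-1<T$ and $T-1\ge1$, the induction hypothesis gives a partition of $\{1,\ldots,T\}\setminus\{\alpha,\beta\}\cup\{\gamma\}$ into sets $J'_1,\ldots,J'_s$ with $2^{y_r}=\sum_{\rho\in J'_r}2^{x'_\rho}$.

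Now let $J'_{r_0}$ be the block containing $\gamma$. Replace $\gamma$ by the two indices $\alpha,\beta$ in that block. This preserves the block sum, since $2^{x_\alpha}+2^{x_\beta}=2^{x'_\gamma}$. The other blocks are untouched, so the result is the required partition $J_1\sqcup\cdots\sqcup J_s$ of $\{1,\ldots,T\}$. Every $J_r$ stays nonempty because each $J'_r$ was nonempty, as $2^{y_r}>0$.

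One technical point is that the induction must be over all index sets of size $T$, not literally over $\{1,\ldots,T\}$. This is only a relabeling and causes no difficulty.

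The main obstacle is ensuring the merging process is well-founded. Strong induction on the number of summands $T$ handles this directly, since each merge strictly decreases $T$ while keeping $T\ge1$. The only other point needing care is uniqueness of binary expansion in the distinct case, which is standard.
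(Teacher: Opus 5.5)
The paper contains no proof of this lemma. It is quoted from Nathanson's paper (his Lemma~2), so there is no in-paper argument to compare yours against.

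Your strong induction on $T$ is a correct and complete proof. The all-distinct case, which includes $T=1$, is exactly uniqueness of binary expansion. A repeated exponent forces $T\geq 2$, so merging $2^{e}+2^{e}=2^{e+1}$ under a fresh index $\gamma$ leaves $T-1\geq 1$ summands with the same total. Splitting $\gamma$ back into $\alpha,\beta$ inside its block preserves every block sum and restores the index set $\{1,\ldots,T\}$.

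Two small remarks. Your observation (a) is never used. Nonemptiness of the $J_r$ is automatic from $2^{y_r}>0$, so it needs no separate argument.

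For comparison, an equally short alternative builds the blocks directly. Sort $x_1\geq\cdots\geq x_T$ and let $k$ be least with $\sum_{\rho\leq k}2^{x_\rho}\geq 2^{y_s}$. The shorter prefix sum $\sum_{\rho<k}2^{x_\rho}$ is $<2^{y_s}$, and $2^{x_k}$ divides both it and $2^{y_s}$; the latter is where (a) is needed. Hence the prefix through $k$ sums to exactly $2^{y_s}$, and one inducts on $s$. That version produces blocks that are intervals in the sorted order. Yours avoids both the sorting and the bound (a), and simply mirrors binary carrying.
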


For each $n\in\Nzero$, define
\begin{equation}\label{eq4}
\begin{aligned}
 n&=\sum_{t\geq0}\varepsilon_t(n)2^t,
 \qquad \varepsilon_t(n)\in\{0,1\},\\
 S(n)&=\{t\in\Nzero:\varepsilon_t(n)=1\}.
\end{aligned}
\end{equation}

\begin{lemma}\label{lema3} Let
\[
 n=b_1+\cdots+b_q,
 \qquad b_\ell\in B_{j_\ell}\quad(1\leq\ell\leq q).
\]
If $n'$ is a nonnegative integer satisfying
\[
 {S}(n')\subseteq{S}(n),
\]
then there exist $b'_\ell\in B_{j_\ell}$ such that
\[
 n'=b'_1+\cdots+b'_q,
 \qquad
 {S}(b'_\ell)\subseteq{S}(b_\ell)
 \quad(1\leq\ell\leq q).
\]
\end{lemma}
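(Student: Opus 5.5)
The proof will go by induction on the number of summands $q$, processing the binary digits of $n$ one at a time. The case $q=1$ is immediate. If $n=b_1\in B_{j_1}$, then $S(n)=S(b_1)$ is either empty or a finite subset of $W_{j_1}$. Hence $S(n')\subseteq W_{j_1}$, so $b_1'=n'$ lies in $B_{j_1}$ (it is $0$ if $S(n')=\emptyset$), and $S(b_1')\subseteq S(b_1)$.

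For general $q$, I would not split off one summand at a time, since carries between the summands make that awkward. Instead I would use Lemma~\ref{lema2} directly. Write each $b_\ell$ in binary, $b_\ell=\sum_{x\in S(b_\ell)}2^x$, so that $n$ is expressed as a sum of powers of two
\[
 n=\sum_{\ell=1}^q\sum_{x\in S(b_\ell)}2^x=\sum_{y\in S(n)}2^y.
\]
Discard the trivial summands $b_\ell=0$, and if all of them vanish take every $b'_\ell=0$. Otherwise Lemma~\ref{lema2} applies: the multiset of pairs $\{(\ell,x):x\in S(b_\ell)\}$ partitions into classes $J_y$, one for each $y\in S(n)$, with $2^y=\sum_{(\ell,x)\in J_y}2^x$.

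Now define
\[
 S(b'_\ell)=\{x\in S(b_\ell):(\ell,x)\in J_y\text{ for some }y\in S(n')\},
\]
that is, keep exactly the binary digits feeding into the digits of $n$ that survive in $n'$. Then $S(b'_\ell)\subseteq S(b_\ell)\subseteq W_{j_\ell}$, so $b'_\ell\in B_{j_\ell}$. Summing over the classes,
\[
 \sum_\ell b'_\ell=\sum_{y\in S(n')}\sum_{(\ell,x)\in J_y}2^x=\sum_{y\in S(n')}2^y=n',
\]
because the $J_y$ are disjoint. This gives the required representation.

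The only delicate points are bookkeeping ones: applying Lemma~\ref{lema2} to the concatenated list of exponents, in which an exponent may repeat across different $\ell$ though not within a single $\ell$, and handling the case where some $b_\ell=0$. Within a fixed $\ell$ the selected exponents are distinct, so $b'_\ell$ is a genuine sum of distinct powers from $W_{j_\ell}$, or $0$ if none are selected. I therefore expect no real obstacle here; the main care is stating the index bookkeeping cleanly.
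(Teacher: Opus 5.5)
Your proposal is correct and is essentially the paper's proof: apply Lemma~\ref{lema2} to the concatenated list of binary digits of the $b_\ell$, treat the case $n=0$ separately, and keep in each $b_\ell$ exactly those digits that lie in classes $J_y$ with $y\in S(n')$. The announced induction on $q$, including the separate $q=1$ case, is never actually used and can be dropped, since the Lemma~\ref{lema2} argument works uniformly for every $q$.
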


\begin{proof}
If $n=0$, then $b_1=\cdots=b_q=n'=0$, and the choice
$b'_\ell=0$ proves the assertion. Hence assume $n>0$.
Write
\[
 S(n)=\{y_1,\ldots,y_s\},
 \qquad y_1<\cdots<y_s,
 \qquad n=\sum_{r=1}^{s}2^{y_r}.
\]
For every $1\leq\ell\leq q$, write
\(b_\ell=\sum_{x\in{S}(b_\ell)}2^x\).
Let
\[
 T=\sum_{\ell=1}^{q}\card{{S}(b_\ell)}.
\]
List all the binary terms above, with multiplicities, as
\[
 2^{x_1},\ldots,2^{x_T}.
\]
Then
\[
 \sum_{\rho=1}^{T}2^{x_\rho}
 =\sum_{\ell=1}^{q}b_\ell
 =n
 =\sum_{r=1}^{s}2^{y_r}.
\]
Let $\lambda(\rho)$ denote the index of the summand containing
$2^{x_\rho}$, for $1\leq\rho\leq T$. Then, for
$1\leq\ell\leq q$,
\[
 \{x_\rho:\lambda(\rho)=\ell\}={S}(b_\ell).
\]

By Lemma~\ref{lema2}, there exists a partition
\[
 \{1,\ldots,T\}=J_1\sqcup\cdots\sqcup J_s
\]
such that
\[
 2^{y_r}=\sum_{\rho\in J_r}2^{x_\rho}
 \qquad(1\leq r\leq s).
\]
By
\[
 {S}(n')\subseteq{S}(n)=\{y_1,\ldots,y_s\},
\]
let
\[
 R'=\{r\in\{1,\ldots,s\}:y_r\in{S}(n')\}.
\]
Then
\[
 n'
 =\sum_{r\in R'}2^{y_r}
 =\sum_{r\in R'}\sum_{\rho\in J_r}2^{x_\rho}.
\]
For $1\leq\ell\leq q$, define
\[
 b'_\ell
 =\sum_{r\in R'}
   \sum_{\substack{\rho\in J_r\\\lambda(\rho)=\ell}}2^{x_\rho}.
\]
For a fixed $\ell$, only some of the binary terms of $b_\ell$ are retained. Hence
\[
 {S}(b'_\ell)
 \subseteq
 \{x_\rho:\lambda(\rho)=\ell\}
 ={S}(b_\ell).
\]
Since $b_\ell\in B_{j_\ell}$ and
$S(b'_\ell)\subseteq S(b_\ell)$, we have
\[
 S(b'_\ell)\subseteq S(b_\ell)\subseteq W_{j_\ell},
 \qquad b'_\ell\in B_{j_\ell}
 \quad(1\leq\ell\leq q).
\]
Finally, since $J_1,\ldots,J_s$ are pairwise disjoint,
\begin{align*}
 \sum_{\ell=1}^{q}b'_\ell
 &=\sum_{\ell=1}^{q}
   \sum_{r\in R'}
   \sum_{\substack{\rho\in J_r\\\lambda(\rho)=\ell}}2^{x_\rho}\\
 &=\sum_{r\in R'}\sum_{\rho\in J_r}2^{x_\rho}\\
 &=\sum_{r\in R'}2^{y_r}\\
 &=n'.
\end{align*}
This completes the proof of the lemma.
\end{proof}

For $m\geq0$, let
\(K(m)=\max_{1\leq j\leq h}W_j(m)\).
By the uniqueness of binary representations,
\begin{equation}\label{eq5}
 A(2^m-1)
 =\sum_{j=1}^h\bigl(2^{W_j(m)}-1\bigr),
 \qquad
 K(m)\geq\frac mh.
\end{equation}

\section{Proof of sufficiency of Theorem \ref{thm1}}

\begin{lemma}\label{lema4} Let $M\geq1$, let $q_0,\ldots,q_{M-1}$ be nonnegative integers, and let $n$ be an integer with $0\leq n<2^M$. Then
\[
 n=\sum_{t=0}^{M-1}d_t2^t,
 \qquad
 0\leq d_t\leq q_t,
\]
holds for some integers $d_t$ if and only if
\begin{equation}\label{eq6}
 n\bmod2^r
 \leq
 \sum_{t=0}^{r-1}q_t2^t
 \qquad(1\leq r\leq M).
\end{equation}
\end{lemma}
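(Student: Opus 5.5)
Necessity is immediate. Suppose $n=\sum_{t=0}^{M-1}d_t2^t$ with $0\le d_t\le q_t$ and fix $1\le r\le M$. Split the sum at $r$. Every term with $t\ge r$ is divisible by $2^r$, so
\[
 n\equiv\sum_{t=0}^{r-1}d_t2^t \pmod{2^r}.
\]
The right-hand side is a nonnegative integer. Since any nonnegative integer congruent to $n$ modulo $2^r$ is at least $n\bmod 2^r$, we get
\[
 n\bmod 2^r\le\sum_{t=0}^{r-1}d_t2^t\le\sum_{t=0}^{r-1}q_t2^t,
\]
which is \eqref{eq6}.

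For sufficiency I would construct the digits $d_t$ greedily from the top, using induction on $M$ and keeping the family of conditions \eqref{eq6} as the inductive hypothesis. Set $Q_r=\sum_{t<r}q_t2^t$, so $Q_0=0$.

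The base case is $M=1$. Here $n\in\{0,1\}$ and the condition reads $n\le q_0$, so $d_0=n$ works.

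For the inductive step, assume $M\ge2$, $0\le n<2^M$, and \eqref{eq6} holds for $r=1,\dots,M$. Write $n'=n\bmod 2^{M-1}$, so $n=n'+\delta 2^{M-1}$ with $\delta\in\{0,1\}$. Choose the top digit
\[
 d_{M-1}=\max\bigl(0,\lceil (n-Q_{M-1})/2^{M-1}\rceil\bigr),
\]
which is the smallest number of copies of $2^{M-1}$ needed so that the remainder $n-d_{M-1}2^{M-1}$ does not exceed $Q_{M-1}$. I then need three facts about this choice.

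\begin{enumerate}
\item[(a)] $d_{M-1}\le q_{M-1}$. This follows from $n\le Q_M=Q_{M-1}+q_{M-1}2^{M-1}$, which is \eqref{eq6} at $r=M$ (there $n\bmod 2^M=n$).
\item[(b)] $d_{M-1}\le\delta$, i.e.\ $d_{M-1}\in\{0,1\}$. This comes from $n'\le Q_{M-1}$, which is \eqref{eq6} at $r=M-1$: it shows that subtracting $\delta 2^{M-1}$ from $n$ already brings the remainder down to at most $Q_{M-1}$.
\item[(c)] The remainder $n''=n-d_{M-1}2^{M-1}$ satisfies $0\le n''\le Q_{M-1}$, by the choice of $d_{M-1}$ together with (b).
\end{enumerate}

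Here is the main obstacle. The remainder $n''$ may be as large as $2^M-1$, and then the inductive hypothesis on $2^{M-1}$ does not apply directly. So I would strengthen the statement being proved. The new statement allows any $0\le n\le Q_M$, with no bound $n<2^M$, under the conditions \eqref{eq6} for $r<M$ together with $n\le Q_M$. For $n<2^M$ the new condition $n\le Q_M$ is exactly \eqref{eq6} at $r=M$, so the lemma follows from this version.

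The induction on $M$ then runs unchanged, with the top digit now chosen as
\[
 d=\max\bigl(0,\lceil (n-Q_{M-1})/2^{M-1}\rceil\bigr)\le q_{M-1},
\]
and the remainder $n''=n-d2^{M-1}$ satisfies $0\le n''\le Q_{M-1}$. The condition $n''\ge0$ is where \eqref{eq6} at $r=M-1$ is used. If $d=0$ there is nothing to check. If $d\ge1$, minimality of $d$ gives $n-(d-1)2^{M-1}>Q_{M-1}$. Since $n-(d-1)2^{M-1}\equiv n\pmod{2^{M-1}}$ and the residue $n\bmod 2^{M-1}$ is at most $Q_{M-1}$, this number is a residue-class representative strictly larger than $Q_{M-1}\ge n\bmod 2^{M-1}$. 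Hence it is at least $(n\bmod 2^{M-1})+2^{M-1}\ge 2^{M-1}$, and therefore $n''\ge0$.

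Finally, the lower conditions pass to $n''$. Because $n''\equiv n\pmod{2^r}$ for every $r\le M-1$, the conditions \eqref{eq6} for $r<M-1$ carry over unchanged, and $n''\le Q_{M-1}$ is the new top condition. This closes the induction.
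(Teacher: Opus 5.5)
Your proof is correct and follows essentially the paper's route: the same necessity argument, and the same strengthened induction on $M$ in which $n<2^M$ is replaced by $n\le\sum_{t<M}q_t2^t$ (this is the paper's \eqref{eq7}), with the top digit peeled off at each step. The only difference is that you take the smallest admissible top digit, $\max\bigl(0,\lceil (n-Q_{M-1})/2^{M-1}\rceil\bigr)$, while the paper takes the largest, $\min\{q_{M-1},\lfloor n/2^{M-1}\rfloor\}$; as a result you use the condition at $r=M-1$ to get the remainder nonnegative, whereas the paper uses it to get the remainder at most $Q_{M-1}$.
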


\begin{proof}
Suppose first that there exist integers $d_0,\ldots,d_{M-1}$ satisfying
\[
 n=\sum_{t=0}^{M-1}d_t2^t,
 \qquad
 0\leq d_t\leq q_t.
\]
For $1\leq r\leq M$,
\begin{align*}
 n&=\sum_{t=0}^{r-1}d_t2^t
 +2^r\sum_{t=r}^{M-1}d_t2^{t-r},\\
 n\bmod2^r&=\left(\sum_{t=0}^{r-1}d_t2^t\right)\bmod2^r,\\
 0\leq n\bmod2^r
 &\leq\sum_{t=0}^{r-1}d_t2^t
 \leq\sum_{t=0}^{r-1}q_t2^t.
\end{align*}
This proves \eqref{eq6}.

Conversely, we prove by induction on $M$ that the conditions
\begin{equation}\label{eq7}
 \left\{
 \begin{aligned}
 0&\leq n\leq\sum_{t=0}^{M-1}q_t2^t,\\
 n\bmod2^r&\leq\sum_{t=0}^{r-1}q_t2^t
 \qquad(1\leq r<M),
 \end{aligned}
 \right.
\end{equation}
imply the existence of integers $d_0,\ldots,d_{M-1}$ with
\[
 n=\sum_{t=0}^{M-1}d_t2^t,
 \qquad
 0\leq d_t\leq q_t.
\]
For $M=1$, take $d_0=n\in[0,q_0]$.

Now let $M\geq2$ and assume the assertion for $M-1$. Write
\(n=Q2^{M-1}+R\),
where
\[
 Q=\left\lfloor\frac{n}{2^{M-1}}\right\rfloor,
 \qquad
 0\leq R<2^{M-1}.
\]
Define
\[
 d_{M-1}=\min\{q_{M-1},Q\},
 \qquad
 n'=n-d_{M-1}2^{M-1}.
\]
Clearly,
\[
 0\leq d_{M-1}\leq q_{M-1}.
\]

If $Q\leq q_{M-1}$, then
\begin{align*}
 d_{M-1}&=Q,\\
 n'&=R=n\bmod2^{M-1},\\
 0\leq n'&\leq\sum_{t=0}^{M-2}q_t2^t
 \qquad\text{by \eqref{eq7}}.
\end{align*}
If $Q>q_{M-1}$, then
\begin{align*}
 d_{M-1}&=q_{M-1},\\
 n'&=(Q-q_{M-1})2^{M-1}+R>0,\\
 n'&=n-q_{M-1}2^{M-1}
 \leq\sum_{t=0}^{M-2}q_t2^t
 \qquad\text{by \eqref{eq7}}.
\end{align*}
In either case,
\begin{equation}\label{eq8}
 0\leq n'\leq\sum_{t=0}^{M-2}q_t2^t.
\end{equation}

Since
\(n-n'=d_{M-1}2^{M-1}\),
for every $1\leq r<M-1$, we have
\(n'\equiv n\pmod{2^r}\).
Therefore,
\[
 n'\bmod2^r
 =n\bmod2^r
 \leq
 \sum_{t=0}^{r-1}q_t2^t
 \qquad(1\leq r<M-1).
\]
By \eqref{eq8} and the induction hypothesis, there exist integers
$d_0,\ldots,d_{M-2}$ such that
\[
 n'=\sum_{t=0}^{M-2}d_t2^t,
 \qquad
 0\leq d_t\leq q_t
 \quad(0\leq t<M-1).
\]
Consequently,
\begin{align*}
 n
 &=n'+d_{M-1}2^{M-1}\\
 &=\sum_{t=0}^{M-2}d_t2^t+d_{M-1}2^{M-1}\\
 &=\sum_{t=0}^{M-1}d_t2^t,
\end{align*}
and
\[
 0\leq d_t\leq q_t
 \qquad(0\leq t<M).
\]
Thus the stronger assertion holds for every $M$. For $0\leq n<2^M$, \eqref{eq6} implies
\[
 0\leq n=n\bmod2^M\leq\sum_{t=0}^{M-1}q_t2^t,
 \qquad
 n\bmod2^r\leq\sum_{t=0}^{r-1}q_t2^t\quad(1\leq r<M).
\]
Applying \eqref{eq7}, we obtain
\[
 n=\sum_{t=0}^{M-1}d_t2^t,\qquad
 0\leq d_t\leq q_t\quad(0\leq t<M).
\]
\end{proof}

\begin{lemma}\label{lema5} Fix $i\in\{1,\ldots,h\}$ and $w\in W_i$. The following two assertions are equivalent.

\begin{enumerate}
\item[(a)] For every
$(k_1,\ldots,k_h)\in\mathcal K_h\setminus\{(1,\ldots,1)\}$, there exists an integer $t\geq1$ such that
\begin{equation}\label{eq9}
 \sum_{j=1}^h k_jP_j(t)
 <
 \begin{cases}
 \displaystyle\sum_{j\neq i}P_j(t),&t\leq w,\\[2mm]
 \displaystyle\sum_{j\neq i}P_j(t)+2^w,&t>w.
 \end{cases}
\end{equation}

\item[(b)] There exists an integer $M>w$ such that, for every
$(k_1,\ldots,k_h)\in\mathcal K_h\setminus\{(1,\ldots,1)\}$,
\begin{equation}\label{eq10}
 2^w+\sum_{\substack{0\leq t<M\\t\notin W_i}}2^t
 \notin
 \sum_{j=1}^h k_jB_j.
\end{equation}
\end{enumerate}
\end{lemma}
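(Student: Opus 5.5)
The plan is to turn membership in $\sum_j k_jB_j$ into a digit-bounded binary representation problem. Lemma~\ref{lema4} then converts that problem into exactly the inequalities \eqref{eq9}.

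Fix $i$, $w\in W_i$, $M>w$ and $(k_1,\ldots,k_h)\in\mathcal K_h$. Put
\[
 n_M=2^w+\sum_{\substack{0\leq t<M\\ t\notin W_i}}2^t .
\]
The powers here are distinct and all below $2^M$, so $0\leq n_M<2^M$. For $t\in\Nzero$ let $j(t)$ be the unique index with $t\in W_{j(t)}$, and set $q_t=k_{j(t)}$.

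The first and main step is the following claim:
\[
 n\in\sum_{j=1}^h k_jB_j
 \iff
 n=\sum_{t\geq0}d_t2^t \text{ for some integers } 0\leq d_t\leq q_t .
\]
For ``$\Rightarrow$'', write $n$ as a sum of $k_j$ elements of $B_j$ for each $j$. Each such element is a sum of distinct powers $2^t$ with $t\in W_j$. Hence each power $2^t$ occurs at most $k_{j(t)}$ times in total, and collecting terms gives the $d_t$.

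For ``$\Leftarrow$'', fix $j$ and distribute the $d_t$ copies of $2^t$ ($t\in W_j$) among $k_j$ slots. Put one copy into each of slots $1,\ldots,d_t$, which is possible because $d_t\leq k_j$. Each slot then receives distinct powers from $W_j$, so it is an element of $B_j$; an empty slot gives $0\in B_j$. Summing over all slots and all $j$ recovers $n$.

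Next we discard the high digits. If $n<2^M$, any representation with $d_t\geq1$ for some $t\geq M$ would exceed $n$. So for $n<2^M$ we may restrict to $0\leq t<M$, which is exactly the setting of Lemma~\ref{lema4}. That lemma gives
\[
 n_M\in\sum_j k_jB_j
 \iff
 n_M\bmod 2^r\leq\sum_{t<r}q_t2^t \quad\text{for all } 1\leq r\leq M .
\]

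Now compute both sides. First, $\sum_{t<r}q_t2^t=\sum_j k_jP_j(r)$. Second, $n_M\bmod 2^r$ keeps only the bits of $n_M$ below $r$. These are $\sum_{j\neq i}P_j(r)$, plus $2^w$ exactly when $r>w$; here we use $w\in W_i$, so $2^w$ is not already counted in $\sum_{j\neq i}P_j(r)$. Consequently $n_M\notin\sum_j k_jB_j$ if and only if some $t$ with $1\leq t\leq M$ satisfies \eqref{eq9}.

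The equivalence then follows. For (b)$\Rightarrow$(a), every $(k_j)\ne(1,\ldots,1)$ yields a witness $t\leq M$ for \eqref{eq9}. For (a)$\Rightarrow$(b), note that $\mathcal K_h$ is finite. Choose $M$ larger than $w$ and larger than every witness $t$ supplied by (a); the criterion above then gives \eqref{eq10} for every $(k_j)$.

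I expect the main care to be needed in the distribution argument of the claim. Two points must be checked there: each slot must consist of distinct powers, and empty slots must be allowed. The latter is the reason the statement uses $B_j=A(W_j)\cup\{0\}$ rather than $A_j$.
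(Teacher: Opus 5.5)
Your proposal is correct and follows essentially the same route as the paper. Both arguments translate membership in $\sum_j k_jB_j$ into a representation $n=\sum_{t<M}d_t2^t$ with $0\leq d_t\leq k_{c(t)}$; your slot distribution corresponds to the paper's choice of subsets $U_t$. Both then apply Lemma~\ref{lema4}, compute $n_M\bmod 2^r$ against $\sum_j k_jP_j(r)$, and use the finiteness of $\mathcal K_h$ to choose $M$ in (a)$\Rightarrow$(b).
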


\begin{proof}
Fix an integer
\[
 M>w
\]
and a vector
\[
 (k_1,\ldots,k_h)\in\mathcal K_h.
\]
For each $0\leq t<M$, since $W_1,\ldots,W_h$ partition $\Nzero$, there is a unique index
\[
 c(t)\in\{1,\ldots,h\}
\]
with
\(t\in W_{c(t)}\).
Let
\(q_t=k_{c(t)} \qquad(0\leq t<M)\).

Let $0\leq n<2^M$. We first establish the equivalence
\[
 n\in\sum_{j=1}^{h}k_jB_j
 \quad\text{if and only if}\quad
 n=\sum_{t=0}^{M-1}d_t2^t
 \quad\text{for some integers }0\leq d_t\leq q_t.
\]
Indeed, if
\(n\in\sum_{j=1}^{h}k_jB_j\),
then we may write
\[
 n=\sum_{j=1}^{h}\sum_{u=1}^{k_j}b_{j,u},
 \qquad
 b_{j,u}\in B_j.
\]
Since all summands are nonnegative,
\[
 0\leq b_{j,u}\leq n<2^M,
 \qquad S(b_{j,u})\subseteq[0,M)\cap\mathbb Z.
\]
For $0\leq t<M$, define
\[
 d_t
 =\card{\{u:1\leq u\leq k_{c(t)},\ t\in{S}(b_{c(t),u})\}}.
\]
Since $t$ belongs to no set other than $W_{c(t)}$,
\[
 0\leq d_t\leq k_{c(t)}=q_t.
\]
Moreover,
\begin{align*}
 n
 &=\sum_{j=1}^{h}\sum_{u=1}^{k_j}
   \sum_{t\in{S}(b_{j,u})}2^t\\
 &=\sum_{t=0}^{M-1}d_t2^t.
\end{align*}

Conversely, suppose that there exist integers $d_t$ satisfying
\[
 n=\sum_{t=0}^{M-1}d_t2^t,
 \qquad
 0\leq d_t\leq k_{c(t)}.
\]
For every $0\leq t<M$, choose a subset $U_t$ of
\[
 \{1,\ldots,k_{c(t)}\}
\]
with $d_t$ elements. For
$1\leq j\leq h$ and $1\leq u\leq k_j$, define
\[
 b_{j,u}
 =\sum_{\substack{0\leq t<M\\t\in W_j\\u\in U_t}}2^t.
\]
By construction,
\[
 S(b_{j,u})\subseteq W_j,
 \qquad b_{j,u}\in B_j.
\]
Moreover, $|U_t|=d_t$, so
\begin{align*}
 \sum_{j=1}^{h}\sum_{u=1}^{k_j}b_{j,u}
 &=\sum_{t=0}^{M-1}d_t2^t\\
 &=n.
\end{align*}
Thus
\[
 n\in\sum_{j=1}^{h}k_jB_j.
\]

By Lemma~\ref{lema4}, this representation exists if and only if
\[
 n\bmod2^r
 \leq
 \sum_{t=0}^{r-1}q_t2^t
 \qquad(1\leq r\leq M).
\]
Since $q_t=k_{c(t)}$ and $t\in W_{c(t)}$, we have
\begin{align*}
 \sum_{t=0}^{r-1}q_t2^t
 &=\sum_{t=0}^{r-1}k_{c(t)}2^t\\
 &=\sum_{j=1}^{h}k_j
   \sum_{t\in W_j\cap[0,r)}2^t\\
 &=\sum_{j=1}^{h}k_jP_j(r).
\end{align*}
Therefore,
\begin{equation}\label{eq11}
\begin{gathered}
n\in\sum_{j=1}^{h}k_jB_j
\quad\text{if and only if}\\[2pt]
n\bmod 2^r\leq\sum_{j=1}^{h}k_jP_j(r)
\qquad(1\leq r\leq M).
\end{gathered}
\end{equation}

Now take
\[
 n_M
 =2^w+\sum_{\substack{0\leq t<M\\t\notin W_i}}2^t.
\]
Since $w\in W_i$ and $w<M$, the binary positions in the expression above are distinct and all are less than $M$. Hence
\[
 0<n_M\leq\sum_{t=0}^{M-1}2^t=2^M-1<2^M.
\]
Fix $1\leq r\leq M$.

If $r\leq w$, then $2^w$ is divisible by $2^r$, and hence
\begin{align*}
 n_M\bmod2^r
 &=\sum_{\substack{0\leq t<r\\t\notin W_i}}2^t\\
 &=\sum_{j\neq i}\sum_{t\in W_j\cap[0,r)}2^t\\
 &=\sum_{j\neq i}P_j(r).
\end{align*}
If $r>w$, then the position $w$ remains among the lowest $r$ binary positions, and hence
\begin{align*}
 n_M\bmod2^r
 &=2^w+\sum_{\substack{0\leq t<r\\t\notin W_i}}2^t\\
 &=2^w+\sum_{j\neq i}P_j(r).
\end{align*}
Thus
\begin{equation}\label{eq12}
 n_M\bmod2^r
 =
 \begin{cases}
 \displaystyle\sum_{j\neq i}P_j(r),&r\leq w,\\[2mm]
 \displaystyle\sum_{j\neq i}P_j(r)+2^w,&r>w.
 \end{cases}
\end{equation}

By \eqref{eq11} and \eqref{eq12},
$n_M\notin\sum_{j=1}^{h}k_jB_j$ holds precisely when there is
an integer $1\leq r\leq M$ such that
\begin{equation}\label{eq13}
 \sum_{j=1}^{h}k_jP_j(r)
 <
 \begin{cases}
 \displaystyle\sum_{j\neq i}P_j(r),&r\leq w,\\[2mm]
 \displaystyle\sum_{j\neq i}P_j(r)+2^w,&r>w.
 \end{cases}
\end{equation}

Suppose that (a) holds. Since
\[
 \mathcal K_h
 =\{(k_1,\ldots,k_h)\in\mathbb Z_{\geq0}^h:
 k_1+\cdots+k_h=h\},
\]
we obtain
\[
 \card{\mathcal K_h}=\binom{2h-1}{h-1}<\infty.
\]
For every
\[
 (k_1,\ldots,k_h)
 \in\mathcal K_h\setminus\{(1,\ldots,1)\},
\]
choose, by (a), an integer
\[
 t(k_1,\ldots,k_h)\geq1
\]
such that \eqref{eq9} holds. Let
\[
 M
 =\max\Bigl(
 \{w+1\}\cup
 \{t(k_1,\ldots,k_h):
 (k_1,\ldots,k_h)\in
 \mathcal K_h\setminus\{(1,\ldots,1)\}\}
 \Bigr).
\]
Then
\(M>w\).
For each such vector, take
\(r=t(k_1,\ldots,k_h)\leq M\).
By \eqref{eq9}, \eqref{eq13} holds at this value of $r$. Therefore,
\[
 n_M\notin\sum_{j=1}^{h}k_jB_j.
\]
This holds simultaneously for all
$(k_1,\ldots,k_h)\in\mathcal K_h\setminus\{(1,\ldots,1)\}$
and hence (b) follows.

Conversely, suppose that (b) holds. Fix any
\[
 (k_1,\ldots,k_h)
 \in\mathcal K_h\setminus\{(1,\ldots,1)\}.
\]
By (b),
\[
 n_M\notin\sum_{j=1}^{h}k_jB_j.
\]
By \eqref{eq13}, there exists an integer
\(1\leq r\leq M\)
such that
\[
 \sum_{j=1}^{h}k_jP_j(r)
 <
 \begin{cases}
 \displaystyle\sum_{j\neq i}P_j(r),&r\leq w,\\[2mm]
 \displaystyle\sum_{j\neq i}P_j(r)+2^w,&r>w.
 \end{cases}
\]
Taking $t=r$ proves (a).
\end{proof}

\begin{proof}[Proof of sufficiency]
Assume that conditions~(I) and~(II) hold. By condition~(I),
\begin{equation}\label{eq14}
 W_j(m)=\frac mh+O(1)
 \qquad(1\leq j\leq h).
\end{equation}
Fix
\[
 a\in A_i,
 \qquad
 w\in{S}(a)\subseteq W_i.
\]
By Lemma~\ref{lema5}, there exists an integer $M>w$ such that for every
\[
 (k_1,\ldots,k_h)\in\mathcal K_h\setminus\{(1,\ldots,1)\},
\]
we have
\begin{equation}\label{eq15}
 2^w+\sum_{\substack{0\leq t<M\\t\notin W_i}}2^t
 \notin
 \sum_{j=1}^hk_jB_j.
\end{equation}
The strict prefix inequality in \eqref{eq13}, once satisfied at some $r\leq M$, is independent of the choice of larger $M$. Thus the exclusion \eqref{eq15} persists when $M$ is enlarged. We may therefore also assume that
\begin{equation}\label{eq16}
 {S}(a)\subseteq[0,M),
 \qquad
 W_j\cap[0,M)\neq\varnothing
 \quad(j\neq i).
\end{equation}

Take
\[
 m>M,
 \qquad
 F\subseteq[M,m)\setminus W_i,
\]
and let
\begin{equation}\label{eq17}
 n_F
 =a+
 \sum_{\substack{0\leq t<M\\t\notin W_i}}2^t
 +\sum_{t\in F}2^t.
\end{equation}
By
\[
 {S}(a)\subseteq W_i,
 \qquad
 F\subseteq[M,m)\setminus W_i,
\]
and ${S}(a)\subseteq[0,M)$, we obtain
\begin{align*}
 {S}(n_F)
 &={S}(a)
 \sqcup
 \bigl(([0,M)\cap\mathbb Z)\setminus W_i\bigr)
 \sqcup F,\\
 {S}(n_F)\cap W_i
 &={S}(a),\\
 0&<n_F<2^m.
\end{align*}

For $j\neq i$, let
\[
 b_j=P_j(M)+\sum_{t\in F\cap W_j}2^t.
\]
By
\[
 P_j(M)=\sum_{\substack{0\leq t<M\\t\in W_j}}2^t,
 \qquad
 W_j\cap[0,M)\neq\varnothing,
\]
we obtain
\begin{align*}
 {S}(b_j)
 &=\bigl(W_j\cap[0,M)\bigr)\sqcup(F\cap W_j)
 \subseteq W_j,\\
 {S}(b_j)&\neq\varnothing,
\end{align*}
and hence
\[
 b_j\in A_j\qquad(j\neq i).
\]
Moreover, since
\[
 ([0,M)\cap\mathbb Z)\setminus W_i
 =\bigsqcup_{j\neq i}(W_j\cap[0,M)),
\]
and
\[
 F=\bigsqcup_{j\neq i}(F\cap W_j),
\]
we have
\begin{align}
 n_F
 &=a+\sum_{j\neq i}
 \left(P_j(M)+\sum_{t\in F\cap W_j}2^t\right)
 \nonumber\\
 &=a+\sum_{j\neq i}b_j
 \in hA.
 \label{eq18}
\end{align}

Consider any representation with $h$ summands
\[
 n_F=c_1+\cdots+c_h,
 \qquad
 c_r\in A.
\]
For $1\leq j\leq h$, let
\(k_j=\card{\{r:c_r\in A_j\}}\).
Then
\begin{align*}
 k_j&\geq0,\\
 \sum_{j=1}^hk_j&=h,
\end{align*}
that is,
\[
 (k_1,\ldots,k_h)\in\mathcal K_h.
\]
Let
\[
 N_M=2^w+\sum_{\substack{0\leq t<M\\t\notin W_i}}2^t.
\]
By $w\in{S}(a)$ and the decomposition of ${S}(n_F)$ above,
\[
 {S}(N_M)
 =\{w\}\sqcup
 \bigl(([0,M)\cap\mathbb Z)\setminus W_i\bigr)
 \subseteq{S}(n_F).
\]
For each $r$, let $j_r$ be the unique index with
$c_r\in A_{j_r}\subseteq B_{j_r}$. Since
$S(N_M)\subseteq S(n_F)$, Lemma~\ref{lema3} gives
\[
 N_M=\sum_{r=1}^hc'_r,\qquad
 c'_r\in B_{j_r},\qquad
 S(c'_r)\subseteq S(c_r).
\]
In particular,
\begin{equation}\label{eq19}
 N_M\in\sum_{j=1}^hk_jB_j.
\end{equation}
This contradicts \eqref{eq15} when
$(k_1,\ldots,k_h)\neq(1,\ldots,1)$. Therefore,
\begin{equation}\label{eq20}
 (k_1,\ldots,k_h)=(1,\ldots,1).
\end{equation}

By \eqref{eq20}, after relabeling the above representation we may write
\[
 n_F=c^{(1)}+\cdots+c^{(h)},
 \qquad
 c^{(j)}\in A_j.
\]
Then
\[
 {S}(c^{(j)})\subseteq W_j
 \qquad(1\leq j\leq h).
\]
From
\[
 W_r\cap W_s=\varnothing\qquad(r\neq s),
\]
we obtain
\[
 {S}(c^{(r)})\cap{S}(c^{(s)})=\varnothing
 \qquad(r\neq s).
\]
Since each binary position occurs in at most one summand, their sum involves no binary carries. Uniqueness of binary representations now gives
\begin{align*}
 S(n_F)&=\bigsqcup_{j=1}^h S(c^{(j)}),\\
 S(c^{(i)})&=S(n_F)\cap W_i=S(a),\\
 c^{(i)}&=\sum_{t\in S(c^{(i)})}2^t
 =\sum_{t\in S(a)}2^t=a.
\end{align*}
In particular,
\begin{equation}\label{eq21}
 c^{(i)}=a.
\end{equation}
Hence every representation
\[
 n_F=c_1+\cdots+c_h,
 \qquad c_r\in A,
\]
satisfies
\[
 a\in\{c_1,\ldots,c_h\}.
\]
Combining this with \eqref{eq18}, we obtain
\[
 n_F\in hA,
 \qquad
 n_F\notin h(A\setminus\{a\}),
\]
and hence
\begin{equation}\label{eq22}
 n_F\in E_a.
\end{equation}

If
\(F_1\neq F_2\),
then by \eqref{eq17},
\[
 {S}(n_{F_1})\cap[M,m)=F_1
 \neq
 F_2={S}(n_{F_2})\cap[M,m).
\]
Thus $n_{F_1}\neq n_{F_2}$.
Moreover,
\begin{align}
 \card{[M,m)\setminus W_i}
 &=(m-M)-\card{W_i\cap[M,m)}
 \nonumber\\
 &=(m-M)-\bigl(W_i(m)-W_i(M)\bigr)
 \nonumber\\
 &=m-W_i(m)-M+W_i(M).
 \label{eq23}
\end{align}
Therefore, by \eqref{eq22},
\begin{align}
 E_a(2^m-1)
 &\geq
 \card{\{F:F\subseteq[M,m)\setminus W_i\}}
 \nonumber\\
 &=2^{\card{[M,m)\setminus W_i}}
 \nonumber\\
 &=2^{m-W_i(m)-M+W_i(M)}
 \nonumber\\
 &=2^{-M+W_i(M)}2^{m-W_i(m)}.
 \label{eq24}
\end{align}
By \eqref{eq14},
\begin{align}
 W_i(m)&=\frac mh+O(1),\nonumber\\
 m-W_i(m)&=\frac{h-1}{h}m+O(1).
 \label{eq25}
\end{align}
Substituting \eqref{eq25} into \eqref{eq24}, we obtain
\begin{equation}\label{eq26}
 E_a(2^m-1)\gg_a2^{(h-1)m/h}.
\end{equation}

By \eqref{eq5} and \eqref{eq14},
\begin{align*}
 A(2^m-1)
 &=\sum_{j=1}^h\bigl(2^{W_j(m)}-1\bigr)\\
 &=\sum_{j=1}^h2^{m/h+O(1)}-h\\
 &\asymp2^{m/h},
\end{align*}
Hence
\begin{equation}\label{eq27}
 A(2^m-1)\asymp2^{m/h}.
\end{equation}
By \eqref{eq26} and \eqref{eq27},
\begin{align}
 E_a(2^m-1)
 &\gg_a2^{(h-1)m/h}
 \nonumber\\
 &\asymp A(2^m-1)^{h-1},
 \label{eq28}
\end{align}
and hence
\[
 E_a(2^m-1)\gg_a A(2^m-1)^{h-1}.
\]

For sufficiently large $x$, choose an integer $m$ such that
\(2^m\leq x+1<2^{m+1}\).
Then
\(2^m-1\leq x<2^{m+1}-1\).
Therefore,
\begin{align*}
 E_a(x)
 &\geq E_a(2^m-1)\\
 &\gg_a2^{(h-1)m/h},
\end{align*}
and
\begin{align*}
 A(x)
 &\leq A(2^{m+1}-1)\\
 &\ll2^{(m+1)/h},
\end{align*}
Hence
\begin{align*}
 A(x)^{h-1}
 &\ll2^{(h-1)(m+1)/h}\\
 &=2^{(h-1)/h}2^{(h-1)m/h}\\
 &\ll_a E_a(x).
\end{align*}
Therefore,
\[
 E_a(x)\gg_a A(x)^{h-1}.
\]
Since
\(a\in A\)
was arbitrary, $A$ is a strongly minimal asymptotic basis of order $h$.
\end{proof}

\section{Proof of necessity of Theorem \ref{thm1}}

\begin{lemma}\label{lema6} Fix $i\in\{1,\ldots,h\}$ and $a\in A_i$. As $m\to\infty$,
\begin{equation}\label{eq29}
\begin{aligned}
&\card{\{0\leq n<2^m:n\in E_a,\ S(n)\cap W_i\neq S(a)\}}\\
&\quad=o\bigl(2^{(h-1)K(m)}\bigr).
\end{aligned}
\end{equation}
\end{lemma}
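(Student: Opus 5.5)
The plan is to classify each $n$ in the set on the left of \eqref{eq29} by the \emph{class vector} of one of its representations. Each such $n$ can then be charged to a family of tuples that is exponentially smaller than $2^{(h-1)K(m)}$.

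\textbf{Setup and the trivial case.} Since $n\in E_a$, $n$ has a representation
\[
 n=a+c_2+\cdots+c_h,\qquad c_r\in A,
\]
and every $c_r<2^m$. Let $(k_1,\ldots,k_h)\in\mathcal K_h$ record how many of $a,c_2,\ldots,c_h$ lie in each $A_j$. If $(k_1,\ldots,k_h)=(1,\ldots,1)$, then $a$ is the only summand from $A_i$ and the other summands lie in pairwise distinct classes $A_j$, $j\neq i$. Their supports are pairwise disjoint, so the sum has no carries and $S(n)\cap W_i=S(a)$, exactly as in the last part of the sufficiency proof. Hence every $n$ being counted admits only representations with class vector different from $(1,\ldots,1)$. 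Since $|\mathcal K_h|$ is finite, it suffices to show, for each fixed vector $(k_1,\ldots,k_h)\neq(1,\ldots,1)$, that the number of such $n$ is $o\bigl(2^{(h-1)K(m)}\bigr)$. We also use that $K(m)\geq m/h\to\infty$ by \eqref{eq5}.

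\textbf{Two summands in one class $A_j$, $j\neq i$.} Suppose $c_r,c_s\in A_j$ with $r\neq s$. Then $c_r+c_s$ lies in the set $\{u+v:u,v\in B_j,\ S(u),S(v)\subseteq W_j\cap[0,m)\}$. Each position of $W_j\cap[0,m)$ contributes a digit $0$, $1$ or $2$, so this set has at most $3^{W_j(m)}\le 3^{K(m)}$ elements. The remaining $h-3$ free summands each take at most $|A\cap[0,2^m)|\le h2^{K(m)}$ values. The count is therefore
\[
 \ll 3^{K(m)}\,2^{(h-3)K(m)}=(3/4)^{K(m)}\,2^{(h-1)K(m)}=o\bigl(2^{(h-1)K(m)}\bigr).
\]

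\textbf{Some $c_r\in A_i$.} This is the main obstacle, since naive tuple counting gives only $O(2^{(h-1)K(m)})$ here. The first thing I would try is to exploit $n\in E_a$: rewrite
\[
 a+c_r=(a\vee c_r)+(a\wedge c_r),
\]
where $\vee$ and $\wedge$ denote bitwise OR and AND. Both terms lie in $B_i$, and by Lemma~\ref{lema3}-type digit manipulations the new representation avoids $a$ unless $a\wedge c_r=0$ and the merged term is forced.

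When $a\wedge c_r=0$, $a+c_r$ is a single element of $A_i$, so $n$ has a representation with $h-1$ nonzero summands. In that case I would try to split some summand, or the merged $A_i$ element, into two nonzero elements of the same class, producing a representation avoiding $a$. That would contradict $n\in E_a$ unless the split is impossible, i.e.\ unless the relevant summands are singletons $2^t$.

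In every surviving configuration, at least one summand is confined to $O(m)$ values instead of $2^{K(m)}$, or one class slot is wasted. Either way the count becomes $O\bigl(m\,2^{(h-2)K(m)}\bigr)=o\bigl(2^{(h-1)K(m)}\bigr)$.

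The delicate point is to make this exchange argument rigorous, so that every $n$ avoided by the exchanges is genuinely counted in a smaller family. Summing the finitely many cases then gives \eqref{eq29}.
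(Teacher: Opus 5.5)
Your reduction to class vectors different from $(1,\ldots,1)$ is correct. So is your first case, where two of $c_2,\ldots,c_h$ lie in the same $A_j$ with $j\neq i$. It gives the saving $(3/4)^{K(m)}$, which is exactly the paper's $2^{-\delta K(m)}$ with $\delta=2-\log_2 3$ in its Case 2. Extra copies of $a$ among the $c_r$ are also harmless, since such a summand takes only one value and the count is at most $(h2^{K(m)})^{h-2}$.

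The gap is the remaining case: exactly one summand equals $a$ and some $c_r\in A_i\setminus\{a\}$. Here you never prove that $c_r$ is confined, and your proposed exchange fails in the decisive configuration $S(a)\subsetneq S(c_r)$. In that configuration $a\wedge c_r=a\neq0$ and $a\vee c_r=c_r$, so the OR/AND rewriting returns the same pair $\{a,c_r\}$. Your claim that the new representation avoids $a$ unless $a\wedge c_r=0$ is therefore false. Your splitting idea is offered only for disjoint supports.

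This is precisely where the difficulty lies. Writing $c_r=a+e$ with $S(e)\subseteq W_i\setminus S(a)$, $c_r$ ranges over $\asymp 2^{W_i(m)}$ values. Without an argument here you only get $O\bigl(2^{(h-1)K(m)}\bigr)$, and the assertion that some summand is confined to $O(m)$ values is unsupported.

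The missing idea is the bit transfer behind the paper's claim \eqref{eq30}. Take $v\in S(c_r)\setminus S(a)$ and write $a+c_r=(a+2^v)+(c_r-2^v)$. If $\card{S(c_r)\setminus S(a)}\geq2$, both new terms lie in $A_i\setminus\{a\}$:
\begin{itemize}
\item the first contains $v\notin S(a)$;
\item the second still has a bit outside $S(a)$.
\end{itemize}
Hence $n\in h(A\setminus\{a\})$, a contradiction. So $S(c_r)\subseteq S(a)\cup\{v\}$ for some $v<m$, which gives $O_a(m)$ values. This yields your claimed bound $O_a\bigl(m\,2^{(h-2)K(m)}\bigr)=o\bigl(2^{(h-1)K(m)}\bigr)$, since $K(m)\geq m/h$. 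This single move also covers your disjoint and non-nested subcases, so the OR/AND step is unnecessary.

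The paper performs this transfer for all copies of $a$ at once. It moves distinct bits of $c_r$ lying above $\max S(a)$ into each copy. This yields the $h-1$ bound in \eqref{eq30} and the cruder but sufficient estimate $C(m+1)^{(h-1)^2}2^{(h-2)K(m)}$.
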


\begin{proof}
Fix an integer $L_a$ such that
\[
 {S}(a)\subseteq[0,L_a),
 \qquad
 a<2^{L_a}.
\]
Fix a sufficiently large integer $m>L_a$ and $n\in E_a\cap[0,2^m)$.
Since
\(E_a\subseteq hA\),
we may choose a representation
\[
 n=a_1+\cdots+a_h,
 \qquad
 a_r\in A\quad(1\leq r\leq h).
\]
Moreover, since
\(n\notin h(A\setminus\{a\})\),
at least one summand in this representation is equal to $a$. Let
\(R=\{r\in\{1,\ldots,h\}:a_r=a\}\).
Then
\(1\leq\card R\leq h\).

We first prove that if $s\notin R$ and $a_s\in A_i$, then
\begin{equation}\label{eq30}
 \card{\bigl({S}(a_s)\cap[L_a,m)\bigr)}\leq h-1.
\end{equation}
Suppose, for a contradiction, that
\[
 \card{\bigl({S}(a_s)\cap[L_a,m)\bigr)}\geq h.
\]
Since $s\notin R$,
\(\card R\leq h-1\).
Therefore, for every $r\in R$, we may choose distinct integers
\[
 v_r\in{S}(a_s)\cap[L_a,m).
\]
Since
\(a_s\in A_i\),
we obtain
\[
 {S}(a_s)\subseteq W_i,
 \qquad
 v_r\in W_i.
\]
Moreover, since
\[
 {S}(a)\subseteq[0,L_a),
 \qquad
 v_r\geq L_a,
\]
we have
\(v_r\notin{S}(a)\).
For $r\in R$, define
\(a'_r=a+2^{v_r}\).
Then
\[
 {S}(a'_r)={S}(a)\sqcup\{v_r\}\subseteq W_i,
\]
and hence
\[
 a'_r\in A_i,
 \qquad
 a'_r\neq a.
\]
Next define
\(a'_s=a_s-\sum_{r\in R}2^{v_r}\).
Since the $v_r$ are distinct elements of $S(a_s)$,
\[
 S(a'_s)=S(a_s)\setminus\{v_r:r\in R\},
\]
so that
\begin{align*}
 \card{\bigl({S}(a'_s)\cap[L_a,m)\bigr)}
 &\geq
 \card{\bigl({S}(a_s)\cap[L_a,m)\bigr)}-\card R\\
 &\geq h-(h-1)\\
 &=1.
\end{align*}
Therefore,
\[
 a'_s>0,
 \qquad
 {S}(a'_s)\subseteq W_i,
\]
and hence
\(a'_s\in A_i\).
Since $S(a'_s)\cap[L_a,m)\neq\varnothing$ whereas
$S(a)\cap[L_a,m)=\varnothing$, we also have $a'_s\neq a$.
For the remaining indices, define
\[
 a'_r=a_r
 \qquad(r\notin R\cup\{s\}).
\]
Since these indices are not in $R$,
\[
 a'_r=a_r\in A\setminus\{a\}.
\]
Consequently,
\[
 a'_r\in A\setminus\{a\}
 \qquad(1\leq r\leq h).
\]
On the other hand,
\begin{align*}
 \sum_{r=1}^{h}a'_r
 &=\sum_{r=1}^{h}a_r
   +\sum_{r\in R}2^{v_r}
   -\sum_{r\in R}2^{v_r}\\
 &=n.
\end{align*}
Thus $n\in h(A\setminus\{a\})$, contrary to $n\in E_a$.
This proves \eqref{eq30}. The same bound holds trivially for $a_r=a$, since
$S(a)\cap[L_a,m)=\varnothing$.

We estimate the number of elements of $A_i$ satisfying \eqref{eq30}. If
\(b\in A_i, \qquad 0<b<2^m\),
then
\({S}(b)\subseteq W_i\cap[0,m)\).
The choices for the two disjoint ranges satisfy
\[
 \begin{aligned}
 \#\{U:U\subseteq W_i\cap[0,L_a)\}&=2^{W_i(L_a)},\\
 \#\{U\subseteq W_i\cap[L_a,m):|U|\leq h-1\}
 &\leq\sum_{r=0}^{h-1}\binom{W_i(m)-W_i(L_a)}r.
 \end{aligned}
\]
Consequently,
\begin{equation}\label{eq31}
\begin{aligned}
&\card{\{b\in A_i:0<b<2^m,\ \card{S(b)\cap[L_a,m)}\leq h-1\}}\\
&\quad\leq 2^{W_i(L_a)}
\sum_{r=0}^{h-1}\binom{W_i(m)-W_i(L_a)}{r}\\
&\quad\leq 2^{W_i(L_a)}
\sum_{r=0}^{h-1}\binom{m-L_a}{r}\\
&\quad\leq C_{a,h}(m+1)^{h-1}.
\end{aligned}
\end{equation}

By \eqref{eq5},
\begin{equation}\label{eq32}
\begin{aligned}
A(2^m-1)
&=\sum_{j=1}^{h}\bigl(2^{W_j(m)}-1\bigr)\\
&\leq\sum_{j=1}^{h}2^{W_j(m)}
\leq h2^{K(m)}.
\end{aligned}
\end{equation}

Fix
\[
 \begin{gathered}
 0\leq n<2^m,\qquad n\in E_a,\qquad
 {S}(n)\cap W_i\neq{S}(a),\\
 n=\sum_{u=1}^{h}a_u,\qquad a_u\in A\ (1\leq u\leq h),
 \qquad a_{s_0}=a.
 \end{gathered}
\]
Let $N_1(m)$ and $N_2(m)$ denote the numbers of integers $n$
admitting representations of the following two types, respectively.

\noindent\emph{\bf Case 1.} Suppose that, for some $1\leq r\leq h-1$,
\[
 \card{\{u\neq s_0:a_u\in A_i\}}=r.
\]
By \eqref{eq30}, \eqref{eq31}, and
\eqref{eq32},
\[
 \begin{aligned}
 &\card{\{b\in A_i:0<b<2^m,\
 \ \card{{S}(b)\cap[L_a,m)}\leq h-1\}}
 \leq C_{a,h}(m+1)^{h-1},\\
 &\card{\{b\in A:0<b<2^m\}}
 =A(2^m-1)\leq h2^{K(m)},\\
 \end{aligned}
\]
The positions of $a$ and the $r$ remaining $A_i$-summands
admit at most $h\binom{h-1}{r}$ choices. Hence
\begin{align}
 N_1(m)
 &\leq h\sum_{r=1}^{h-1}\binom{h-1}{r}
 \bigl(C_{a,h}(m+1)^{h-1}\bigr)^r
 \bigl(h2^{K(m)}\bigr)^{h-1-r}
 \nonumber\\
 &\leq C'_{a,h}\sum_{r=1}^{h-1}
 (m+1)^{r(h-1)}2^{(h-1-r)K(m)}
 \nonumber\\
 &\leq C''_{a,h}(m+1)^{(h-1)^2}2^{(h-2)K(m)}.
 \label{eq33}
\end{align}
Since $K(m)\geq m/h$,
\begin{align}
 \frac{N_1(m)}{2^{(h-1)K(m)}}
 &\leq C''_{a,h}(m+1)^{(h-1)^2}2^{-K(m)}
 \nonumber\\
 &\leq C''_{a,h}(m+1)^{(h-1)^2}2^{-m/h}
 \longrightarrow0.
 \label{eq34}
\end{align}
Thus
\[
 N_1(m)=o\bigl(2^{(h-1)K(m)}\bigr).
\]

\noindent\emph{\bf Case 2.} Suppose that
\[
 a_u\notin A_i\qquad(u\neq s_0).
\]
For $j\neq i$, put
\[
 r_j=\card{\{u\neq s_0:a_u\in A_j\}},\qquad
 r_j\geq0,\qquad\sum_{j\neq i}r_j=h-1.
\]
If $(r_j)_{j\neq i}=(1,\ldots,1)$, let $b_j\in A_j$ be
the unique $A_j$-summand for $j\neq i$. Since their binary
supports are disjoint,
\[
 n=a+\sum_{j\neq i}b_j,
 \qquad
 S(n)\cap W_i=S(a),
\]
contrary to the choice of $n$. Hence
$(r_j)_{j\neq i}\neq(1,\ldots,1)$, and, since
$\sum_{j\neq i}r_j=h-1$, we have
\[
 \max_{j\neq i}r_j\geq2.
\]

Fix such a vector $(r_j)_{j\neq i}$ and write its summands as
$a_{j,1},\ldots,a_{j,r_j}\in A_j$. Since
$0<a_{j,u}\leq n<2^m$, for $j\neq i$ we have
\[
 \begin{aligned}
 \sum_{u=1}^{r_j}a_{j,u}
 &=\sum_{t\in W_j\cap[0,m)}
 \left(\sum_{u=1}^{r_j}\varepsilon_t(a_{j,u})\right)2^t,\\
 0&\leq\sum_{u=1}^{r_j}\varepsilon_t(a_{j,u})\leq r_j
 \qquad(t\in W_j\cap[0,m)).
 \end{aligned}
\]
Therefore,
\begin{equation}\label{eq35}
\begin{aligned}
N_2(m;(r_j)_{j\ne i})
&\leq \prod_{j\ne i}(r_j+1)^{W_j(m)}\\
&\leq \prod_{j\ne i}(r_j+1)^{K(m)}\\
&=2^{K(m)\sum_{j\ne i}\log_2(r_j+1)}.
\end{aligned}
\end{equation}
Moreover,
\[
 \begin{aligned}
 \log_2(r+1)&=r &&(r=0,1),\\
 r+1&\leq3\cdot2^{r-2} &&(r\geq2),\\
 \log_2(r+1)&\leq r-2+\log_2 3 &&(r\geq2).
 \end{aligned}
\]
Put
\begin{equation}\label{eq36}
 \delta=2-\log_2 3>0.
\end{equation}
Since $\max_{j\neq i}r_j\geq2$,
\[
 \sum_{j\neq i}\log_2(r_j+1)
 \leq\sum_{j\neq i}r_j-\delta
 =h-1-\delta.
\]
Thus
\[
 N_2(m;(r_j)_{j\neq i})
 \leq2^{(h-1-\delta)K(m)}.
\]
The number of possible vectors satisfies
\[
\begin{aligned}
&\#\left\{(r_j)_{j\ne i}\in\Nzero^{h-1}:
\begin{array}{l}
\sum_{j\ne i}r_j=h-1,\\[-1pt]
\max_{j\ne i}r_j\geq2
\end{array}\right\}\\
&\qquad=\binom{2h-3}{h-2}-1
\leq\binom{2h-3}{h-2}.
\end{aligned}
\]
Consequently,
\begin{equation}\label{eq37}
\begin{aligned}
N_2(m)
&\leq\binom{2h-3}{h-2}2^{(h-1-\delta)K(m)}\\
&=O_h\bigl(2^{(h-1-\delta)K(m)}\bigr)\\
&=o\bigl(2^{(h-1)K(m)}\bigr).
\end{aligned}
\end{equation}

Every integer under consideration occurs in at least one of the two cases. The cases need not be disjoint, so possible multiple representations only strengthen the following upper bound. Thus, by \eqref{eq34} and \eqref{eq37},
\[
\begin{aligned}
&\card{\{0\leq n<2^m:n\in E_a,\ S(n)\cap W_i\ne S(a)\}}\\
&\quad\leq N_1(m)+N_2(m)
=o\bigl(2^{(h-1)K(m)}\bigr).
\end{aligned}
\]
This is \eqref{eq29}.
\end{proof}

\begin{lemma}\label{lema7} If $A$ is a strongly minimal asymptotic basis of order $h$, then there exists a constant $C>0$ such that
\[
 \left|W_i(m)-\frac mh\right|\leq C
 \qquad(1\leq i\leq h,\ m\geq0).
\]
\end{lemma}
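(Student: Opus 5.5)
The plan is to play the lower bound $E_a(x)\geq c_aA(x)^{h-1}$ coming from strong minimality against an upper bound for $E_a(2^m-1)$ obtained from Lemma~\ref{lema6}, and to read off the balance of the $W_j(m)$ from the resulting inequality. For each $i\in\{1,\ldots,h\}$ fix one element $a_i\in A_i$; this is possible because $W_i$ is infinite, and one may take $a_i=2^{w_i}$ with $w_i\in W_i$. Strong minimality gives constants $c_i>0$ with $E_{a_i}(x)\geq c_iA(x)^{h-1}$ for all large $x$. Apply this at $x=2^m-1$. By \eqref{eq5}, $A(2^m-1)\geq 2^{K(m)}-1\gg 2^{K(m)}$, so
\[
E_{a_i}(2^m-1)\gg_i 2^{(h-1)K(m)}.
\]

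For the upper bound, split $E_{a_i}\cap[0,2^m)$ according to whether $S(n)\cap W_i=S(a_i)$ or not.
\begin{enumerate}
\item[(i)] The integers $n<2^m$ with $S(n)\cap W_i=S(a_i)$ are determined by their bits outside $W_i$ in $[0,m)$. There are therefore at most $2^{m-W_i(m)}$ of them.
\item[(ii)] The remaining ones number $o(2^{(h-1)K(m)})$ by Lemma~\ref{lema6}.
\end{enumerate}
Combining the two bounds and absorbing the $o$-term into the left-hand side, for large $m$ we get $2^{(h-1)K(m)}\ll_i 2^{m-W_i(m)}$. Taking the maximum of the constants over $i$ gives
\[
(h-1)K(m)\leq m-W_i(m)+C_0=\sum_{j\neq i}W_j(m)+C_0\qquad(1\leq i\leq h)
\]
for all large $m$, and hence for all $m$ after enlarging $C_0$.

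Since each $W_j(m)\leq K(m)$, we also have $\sum_{j\neq i}W_j(m)\leq (h-1)K(m)$. The displayed inequality therefore forces every term to be nearly maximal: $W_j(m)\geq K(m)-C_0$ for all $j\neq i$. Because $h\geq2$, applying this with two different values of $i$ covers every $j$, so $K(m)-C_0\leq W_j(m)\leq K(m)$ for all $j$. Summing over $j$ and using \eqref{eq2} gives $hK(m)-hC_0\leq m\leq hK(m)$, so $K(m)=m/h+O(1)$. It follows that $|W_j(m)-m/h|\leq C$ uniformly in $j$ and $m$.

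The substantive input is Lemma~\ref{lema6}, which is already available. The only step needing care is making the constants uniform. The $o$-term must be absorbed into the left side only for $m$ beyond a threshold, the strong-minimality inequality holds only for large $x$, and the finitely many small $m$ must be absorbed into $C$. Both issues are harmless because there are only $h$ fixed elements $a_i$.
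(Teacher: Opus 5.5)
Your proposal is correct and follows essentially the same route as the paper: the lower bound $E_{a_i}(2^m-1)\gg 2^{(h-1)K(m)}$ from strong minimality via $A(2^m-1)\geq 2^{K(m)}-1$, the split of $E_{a_i}\cap[0,2^m)$ according to whether $S(n)\cap W_i=S(a_i)$, and Lemma~\ref{lema6} to get $2^{m-W_i(m)}\gg 2^{(h-1)K(m)}$ uniformly in $i$. The only difference is the last bit of algebra, and it is cosmetic. The paper plugs in the index attaining $K(m)$ to get $hK(m)\leq m+O(1)$ and then uses $\sum_j W_j(m)=m$ for the lower bound. You instead squeeze each $W_j(m)$ between $K(m)-C_0$ and $K(m)$ before summing.
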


\begin{proof}
Since each $W_i$ is infinite, each $A_i$ is infinite. For
$1\leq i\leq h$, fix
\(a_i\in A_i\).
By strong minimality, for each $i$ there exist a constant $c_i>0$ and an integer $m_i$ such that, for
$m\geq m_i$,
\[
 E_{a_i}(2^m-1)
 \geq
 c_iA(2^m-1)^{h-1}.
\]
By \eqref{eq5},
\[
 A(2^m-1)
 =\sum_{j=1}^{h}\bigl(2^{W_j(m)}-1\bigr).
\]
Choose $j_0$ such that $W_{j_0}(m)=K(m)$. Then
\begin{align*}
 K(m)&\geq\frac1h\sum_{j=1}^hW_j(m)=\frac mh\longrightarrow\infty,\\
 A(2^m-1)&=\sum_{j=1}^h(2^{W_j(m)}-1)
 \geq 2^{K(m)}-1\geq2^{K(m)-1}
\end{align*}
for all sufficiently large $m$.
Hence, for every $i$, there exists a constant $c'_i>0$ such that
\begin{align}
 E_{a_i}(2^m-1)
 &\geq c_iA(2^m-1)^{h-1}
 \nonumber\\
 &\geq c_i\bigl(2^{K(m)-1}\bigr)^{h-1}
 \nonumber\\
 &=c'_i2^{(h-1)K(m)}
 \label{eq38}
\end{align}
for all sufficiently large $m$.

Fix $m$ with $S(a_i)\subseteq[0,m)$. Partition the elements of
$E_{a_i}\cap[0,2^m)$ according to whether $S(n)\cap W_i=S(a_i)$.
In the first class,
\[
\begin{gathered}
\varepsilon_t(n)=
\begin{cases}
1,&t\in S(a_i),\\
0,&t\in(W_i\cap[0,m))\setminus S(a_i),
\end{cases}\\[2pt]
\#\bigl(([0,m)\cap\mathbb Z)\setminus W_i\bigr)=m-W_i(m).
\end{gathered}
\]
Therefore
\begin{align*}
 \#\{0\leq n<2^m:S(n)\cap W_i=S(a_i)\}
 &=2^{m-W_i(m)},\\
 \#\{0\leq n<2^m:n\in E_{a_i},\ S(n)\cap W_i\neq S(a_i)\}
 &=o(2^{(h-1)K(m)})
\end{align*}
by Lemma~\ref{lema6}. Hence
\begin{equation}\label{eq39}
 E_{a_i}(2^m-1)
 \leq
 2^{m-W_i(m)}
 +o\bigl(2^{(h-1)K(m)}\bigr).
\end{equation}

Combining \eqref{eq38} and \eqref{eq39}, for each fixed $i$ we have
\[
 c'_i2^{(h-1)K(m)}
 \leq
 2^{m-W_i(m)}
 +o\bigl(2^{(h-1)K(m)}\bigr).
\]
For sufficiently large $m$,
\[
 o\bigl(2^{(h-1)K(m)}\bigr)
 \leq
 \frac{c'_i}{2}2^{(h-1)K(m)}.
\]
It follows that
\[
 2^{m-W_i(m)}
 \geq
 \frac{c'_i}{2}2^{(h-1)K(m)}.
\]
Since there are only finitely many indices $i$, we may choose a positive constant $c$, depending only on the $c'_i$, such that for every $i$ and all sufficiently large $m$,
\begin{equation}\label{eq40}
 2^{m-W_i(m)}
 \geq
 c\,2^{(h-1)K(m)}.
\end{equation}

Choose $i(m)$ with $W_{i(m)}(m)=K(m)$. Since the constant
$c$ in \eqref{eq40} is independent of $i$,
\begin{align*}
 2^{m-K(m)}&\geq c\,2^{(h-1)K(m)},
 \end{align*}
and so
\[ hK(m)\leq m-\log_2c.\]

Consequently,
\begin{equation}\label{eq41}
 K(m)\leq\frac mh+C_1
\end{equation}
for some constant $C_1>0$ and all sufficiently large $m$.

Since $W_i(m)\leq K(m)$,
\[
 W_i(m)\leq\frac mh+C_1
 \qquad(1\leq i\leq h).
\]
By \eqref{eq2},
\[
 \sum_{j=1}^{h}W_j(m)=m.
\]
Fix $i$. Then
\begin{align*}
 W_i(m)
 &=m-\sum_{j\neq i}W_j(m)\\
 &\geq
 m-\sum_{j\neq i}\left(\frac mh+C_1\right)\\
 &=m-(h-1)\frac mh-(h-1)C_1\\
 &=\frac mh-(h-1)C_1.
\end{align*}
Therefore, for all sufficiently large $m$,
\[
 \frac mh-(h-1)C_1
 \leq
 W_i(m)
 \leq
 \frac mh+C_1.
\]
Hence there exists a constant $C_2>0$ such that
\[
 \left|W_i(m)-\frac mh\right|\leq C_2
\]
for all $1\leq i\leq h$ and all sufficiently large $m$. Increasing the constant to cover the finitely many smaller values of $m$, we obtain $C>0$ such that
\[
 \left|W_i(m)-\frac mh\right|\leq C
 \qquad(1\leq i\leq h,\ m\geq0).
\]
This completes the proof of the lemma.
\end{proof}

\begin{lemma}\label{lema8} Let $a\in A_i$, let $I=[s,s+L)\cap\mathbb Z$, and suppose that
\(s>\max S(a)\).
Let $\mathcal P$ be an index set with $\card{\mathcal P}=h$, and suppose that
\[
 b_p\in A\cup\{0\}\qquad(p\in\mathcal P).
\]
Suppose that there exist $p_*\in\mathcal P$ and $\ell\in\{1,\ldots,h\}$ such that
\[
 b:=b_{p_*}\in A_\ell,
 \qquad
 \card{{S}(b)\cap I}\geq h.
\]
Put
\[
 R=\{p\in\mathcal P:b_p=a\},
 \qquad
 Z=\{p\in\mathcal P:b_p=0\}.
\]
If
\[
 R=\varnothing
 \qquad\text{or}\qquad
 \ell=i,
\]
then
\[
 \sum_{p\in\mathcal P}b_p\in h(A\setminus\{a\}).
\]
\end{lemma}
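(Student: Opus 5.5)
We will produce $h$ elements of $A\setminus\{a\}$ with the same total by moving single high bits of $b$ onto the other summands, in the spirit of the proof of \eqref{eq30} in Lemma~\ref{lema6}.

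Write $S(b)\cap I\supseteq\{v_1,\dots,v_h\}$ with distinct $v$'s, all lying in $W_\ell$ and all larger than $\max S(a)$. The summands that must be changed are those equal to $a$ (indices in $R$) and those equal to $0$ (indices in $Z$). Note that $p_*\notin Z$. If $\ell=i$ we also have $p_*\notin R$: otherwise $b=a$, but $S(a)\cap I=\varnothing$ since $s>\max S(a)$. In the case $R=\varnothing$ we likewise have $p_*\notin R$. Hence in both cases $p_*\notin R\cup Z$, so $|R\cup Z|\le h-1$.

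Pick an injection $p\mapsto v_p$ from $R\cup Z$ into $\{v_1,\dots,v_h\}$; this is possible because $|R\cup Z|\le h-1$, and it leaves at least one $v$ unused. Define the new summands as follows.
\begin{itemize}
\item For $p\in Z$, set $b'_p=2^{v_p}\in A_\ell$. It is nonzero, and it differs from $a$ because $v_p>\max S(a)$.
\item For $p\in R$, which is nonempty only when $\ell=i$, set $b'_p=a+2^{v_p}$. Its support is $S(a)\sqcup\{v_p\}\subseteq W_i$, so $b'_p\in A_i$, and $b'_p\neq a$.
\item Set $b'_{p_*}=b-\sum_{p\in R\cup Z}2^{v_p}$. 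Since the $v_p$ are distinct elements of $S(b)$, we have $S(b'_{p_*})=S(b)\setminus\{v_p\}\subseteq W_\ell$. This set still contains an unused element of $S(b)\cap I$, so $b'_{p_*}\in A_\ell$ and $b'_{p_*}\neq a$, because $S(a)\cap I=\varnothing$.
\item For all remaining $p$, set $b'_p=b_p$. These lie in $A$, are nonzero since $p\notin Z$, and are not equal to $a$ since $p\notin R$.
\end{itemize}

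The total is unchanged: the sum gains $\sum_{p\in R\cup Z}2^{v_p}$ on the modified summands and loses exactly this amount at $p_*$. Thus
\[
\sum_{p\in\mathcal P}b_p=\sum_{p\in\mathcal P}b'_p\in h(A\setminus\{a\}),
\]
since $|\mathcal P|=h$ and every $b'_p$ lies in $A\setminus\{a\}$.

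The main point to check is where the hypothesis ``$R=\varnothing$ or $\ell=i$'' is used. The construction $a+2^{v_p}$ stays inside $A_i$ only when $v_p\in W_i$, that is, when $\ell=i$. If $\ell\ne i$, no such repair is available for summands equal to $a$, so we need $R=\varnothing$; then only the zero summands are repaired, and that works for every $\ell$.

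The other delicate step is the bookkeeping showing that $p_*\notin R\cup Z$, which is what guarantees an unused $v$ remains at $p_*$. The count $|R\cup Z|\le h-1$ depends on $S(a)\cap I=\varnothing$ in the case $\ell=i$.
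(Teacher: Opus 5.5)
Your proposal is correct and follows the paper's proof essentially step for step: first show $p_*\notin R\cup Z$, then choose distinct $v_p\in S(b)\cap I$ for $p\in R\cup Z$, and move the bits $2^{v_p}$ from $b$ onto the summands equal to $a$ or $0$. The only cosmetic difference is that the paper obtains $b\neq a$ in one line from $S(b)\cap I\neq\varnothing=S(a)\cap I$, which holds for every $\ell$, so it needs no case split on $\ell$.
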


\begin{proof}
Since $S(b)\cap I\neq\varnothing$ whereas
$S(a)\cap I=\varnothing$, we have $b>0$ and $b\neq a$. Thus
\[
 p_*\notin R\cup Z,\qquad
 |R|+|Z|=|R\cup Z|\leq|\mathcal P|-1=h-1.
\]

By
\[
 \card{{S}(b)\cap I}\geq h
 \qquad\text{and}\qquad
 \card{R\cup Z}\leq h-1,
\]
we may choose, for every $p\in R\cup Z$, distinct integers
\(v_p\in{S}(b)\cap I\).
Let
\(V=\{v_p:p\in R\cup Z\}\).
Then
\[
 V\subseteq{S}(b)\cap I,
 \qquad
 \card V=\card R+\card Z\leq h-1.
\]
Define
\[
 b'_p=
 \begin{cases}
 a+2^{v_p},&p\in R,\\
 2^{v_p},&p\in Z,\\
 b-\displaystyle\sum_{v\in V}2^v,&p=p_*,\\
 b_p,&p\in\mathcal P\setminus(R\cup Z\cup\{p_*\}).
 \end{cases}
\]
We show that
\[
 b'_p\in A\setminus\{a\}.
\]

If $p\in R$, then $R\neq\varnothing$. By hypothesis,
\[
 R=\varnothing
 \qquad\text{or}\qquad
 \ell=i,
\]
we must have
\(\ell=i\).
Therefore,
\[
 v_p\in{S}(b)\subseteq W_i.
\]
Moreover, since
\(v_p\in I, \qquad s>\max S(a)\),
we have
\[
 v_p>\max S(a),
 \qquad
 v_p\notin{S}(a).
\]
Hence
\[
 {S}(b'_p)
 ={S}(a+2^{v_p})
 ={S}(a)\sqcup\{v_p\}
 \subseteq W_i.
\]
Consequently,
\(b'_p\in A_i\).
Furthermore,
\(b'_p=a+2^{v_p}>a\),
so
\[
 b'_p\in A_i\setminus\{a\}.
\]

For $p\in Z$, we have
\(v_p\in{S}(b)\subseteq W_\ell\),
and hence
\({S}(b'_p)=\{v_p\}\subseteq W_\ell\).
Therefore,
\[
 b'_p=2^{v_p}\in A_\ell.
\]
Since $v_p\in I$ and $S(a)\cap I=\varnothing$, we also have
$b'_p\neq a$. Thus $b'_p\in A\setminus\{a\}$.

For $p=p_*$, since
\(V\subseteq{S}(b)\),
we have
\[
 b'_{p_*}
 =b-\sum_{v\in V}2^v
 =\sum_{t\in{S}(b)\setminus V}2^t.
\]
Furthermore,
\begin{align*}
 \card{({S}(b)\cap I)\setminus V}
 &\geq
 \card{{S}(b)\cap I}-\card V\\
 &\geq h-(h-1)\\
 &=1.
\end{align*}
Therefore,
\[
 {S}(b)\setminus V\neq\varnothing,
 \qquad
 b'_{p_*}>0.
\]
Also,
\[
 {S}(b'_{p_*})={S}(b)\setminus V\subseteq W_\ell,
\]
and hence
\(b'_{p_*}\in A_\ell\).
Moreover, $S(b'_{p_*})\cap I\neq\varnothing$ whereas
$S(a)\cap I=\varnothing$. Thus $b'_{p_*}\in A\setminus\{a\}$.

Finally, take
\[
 p\in\mathcal P\setminus(R\cup Z\cup\{p_*\}).
\]
Since $p\notin Z$,
\(b_p\neq0\).
Using also
\(b_p\in A\cup\{0\}\),
we obtain
\(b_p\in A\).
Moreover, since $p\notin R$,
\(b_p\neq a\).
Therefore,
\[
 b'_p=b_p\in A\setminus\{a\}.
\]
Consequently, for every $p\in\mathcal P$,
\[
 b'_p\in A\setminus\{a\}.
\]

It remains to check the sum. Since $V=\{v_p:p\in R\cup Z\}$ and the $v_p$ are distinct,
\[
 \sum_{v\in V}2^v
 =\sum_{p\in R}2^{v_p}+\sum_{p\in Z}2^{v_p}.
\]
Therefore,
\begin{align*}
 \sum_{p\in\mathcal P}b'_p
 &=\sum_{p\in\mathcal P}b_p
   +\sum_{p\in R}2^{v_p}
   +\sum_{p\in Z}2^{v_p}
   -\sum_{v\in V}2^v\\
 &=\sum_{p\in\mathcal P}b_p.
\end{align*}
Hence
\[
 \sum_{p\in\mathcal P}b_p
 =\sum_{p\in\mathcal P}b'_p
 \in h(A\setminus\{a\}).
\]
This completes the proof of the lemma.
\end{proof}

\begin{lemma}\label{lema9} Suppose that, for some constant $C>0$,
\begin{equation}\label{eq42}
 \left|W_j(m)-\frac mh\right|\leq C
 \qquad(1\leq j\leq h,\ m\geq0).
\end{equation}
Then there exists an integer $L=L(h,C)\geq1$ with the following property. For every $i\in\{1,\ldots,h\}$, every $a\in A_i$, every interval $I=[s,s+L)\cap\mathbb Z$ with $s>\max S(a)$, and every
\[
 (k_1,\ldots,k_h)\in\mathcal K_h\setminus\{(1,\ldots,1)\},
\]
if an integer $n$ has binary digits $0$ at all positions in $W_i\cap I$ and binary digits $1$ at all positions in $I\setminus W_i$, then
\[
 n\in\sum_{j=1}^hk_jB_j
 \quad\Longrightarrow\quad
 n\in h(A\setminus\{a\}).
\]
\end{lemma}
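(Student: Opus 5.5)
Take a representation
\[
 n=\sum_{j=1}^h\sum_{u=1}^{k_j}b_{j,u},\qquad b_{j,u}\in B_j,
\]
and index the $h$ summands by a set $\mathcal P$ with $\card{\mathcal P}=h$. The aim is to verify the hypotheses of Lemma~\ref{lema8} for this family, with $b_p\in A\cup\{0\}$. That means producing a summand $b=b_{p_*}\in A_\ell$ with $\card{S(b)\cap I}\geq h$ such that either no summand equals $a$ or $\ell=i$. Lemma~\ref{lema8} then gives $n\in h(A\setminus\{a\})$ directly. So the work is to choose $L$ large enough, depending only on $h$ and $C$, that such a summand must exist.

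First I count, inside $I$, how the digits of $n$ are produced. By \eqref{eq42}, every $W_j\cap I$ has size $L/h+O(C)$. Set $L$ to be a large multiple of $h$. Positions of $I$ in $W_j$ with $j\neq i$ carry digit $1$ in $n$, and positions in $W_i\cap I$ carry digit $0$. Every summand lies below $n$, and binary addition of summands from the various $B_j$ has carries. I will control carries through Lemma~\ref{lema3}, applied to the sub-number $n'$ whose support is $S(n)\cap I$, or better to the truncation $n\bmod 2^{s+L}$ restricted to positions at least $s$. Alternatively I will use the digit-count description from the proof of Lemma~\ref{lema5}: a sum of $k_j$ elements of $B_j$ is $\sum_t d_t2^t$ with $0\le d_t\le k_{c(t)}$. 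The carry into position $s$ from below is at most $h$, and inside $I$ the column sums $d_t$ satisfy
\[
 \sum_{t\in I} d_t2^t+\text{carry}\equiv n \pmod{2^{s+L}}.
\]
This forces the digit pattern of $n$ on $I$ to be matched by column counts up to a bounded carry effect. The pattern alternates $0$s and $1$s according to membership in $W_i$, and $W_i$ has density $1/h$ in every window. A bounded carry can only cause a bounded number of disruptions before it dies out at a $0$ of $n$ with $d_t=0$. I expect to conclude that in all but $O_h(1)$ positions of $W_j\cap I$ ($j\ne i$), some summand from $B_j$ has its digit there.

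Now do the case split. Since $(k_j)\neq(1,\ldots,1)$, some $k_{j_0}=0$, and some $k_{j_1}\ge2$. If $j_0\neq i$, the positions of $W_{j_0}\cap I$ carry digit $1$ in $n$ but no summand can cover them. That is impossible for large $L$ unless carries fill them. A carry fills a $1$ only by producing overflow, and each such event consumes a position where the column sum is at least $2$. So along $I$ there must be $\gg L/h$ positions with $d_t\ge1$ in classes other than $i$ (or in class $i$ itself). By pigeonhole among at most $h$ summands, one summand $b$ has at least $h$ digits in $I$.

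If $j_0=i$, then $a$ cannot be a summand because no summand lies in $B_i$. So $R=\varnothing$, and it suffices to find any summand with $h$ digits in $I$. Such a summand exists by pigeonhole: the $1$-positions of $n$ in $I\setminus W_i$ number about $(h-1)L/h$. Each is covered by some summand digit or by a carry, and carries are globally limited to roughly the total number of summand digits. Hence some summand has at least $L/h^2-O(1)\ge h$ digits in $I$.

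In the case $j_0\ne i$ where $k_i\ge1$, I must ensure $\ell=i$ when $a$ appears. Here I argue that the forced overflow comes from columns in $W_i$. Carries are needed to create $1$s at $W_{j_0}$ positions while producing $0$ at the neighbouring $W_i$ positions, and that should force many digits of $B_i$-summands in $I$. Among at most $k_i$ such summands, pigeonhole then gives one with at least $h$ digits.

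The main obstacle is this carry analysis: bounding how the absence of some class, or the $0$-pattern on $W_i$, forces many digits in one specific summand in the right class. I will first try to make it rigorous through the prefix inequality \eqref{eq11} applied to windows inside $I$, with $L$ chosen as $h^3(C+1)$ or similar.
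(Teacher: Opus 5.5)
\paragraph{A genuine gap: the carry analysis.}
Your architecture matches the paper's. You reduce to Lemma~\ref{lema8}, split into $k_i=0$ (so $R=\varnothing$) and $k_i\geq1$ (so any $j_0$ with $k_{j_0}=0$ differs from $i$, and you need $\ell=i$), and finish by pigeonhole. But the step that carries all the content is not proved: producing a summand of the right class with $\card{S(b)\cap I}\geq h$. You call it ``the main obstacle'' and support it only with heuristics, some of them false.

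Here is a counterexample to those heuristics. Take $h=2$, $W_1$ the even and $W_2$ the odd integers, $i=1$, $(k_1,k_2)=(2,0)$, and $n=b+b$ with $b\in A_1$ whose support contains every even number in $[s-1,s+L-1)$.
\begin{itemize}
\item Then $n$ has exactly the required pattern on $I$.
\item There is no $B_2$-summand at all.
\item The column sums on $I$ are $2$ on $W_1$ and $0$ on $W_2$, so they disagree with the digits of $n$ at every position.
\end{itemize}
So the digit pattern is not ``matched by column counts up to a bounded carry effect''. Nor do all but $O_h(1)$ positions of $W_j\cap I$ ($j\neq i$) receive a $B_j$-digit. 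Carries are regenerated at every even position rather than dying out after boundedly many disruptions.

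Your claim that the overflow must come from $W_i$-columns is asserted, not derived. The prefix criterion \eqref{eq11} cannot supply it. That criterion only decides membership in $\sum_jk_jB_j$ and forgets which summand carries which digits, whereas Lemma~\ref{lema8} needs a specific summand of the given representation. The cases $k_i=0$ and $k_i\geq2$ genuinely occur, as the example shows, so a membership test cannot exclude them.

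\paragraph{What would close the gap.}
The missing tool is exact carry bookkeeping for the given representation. Let $d_t$ be the column sums, with $0\leq d_t\leq k_{c(t)}$. Define carries by $c_0=0$ and $d_t+c_t=\varepsilon_t(n)+2c_{t+1}$. Induction gives $0\leq c_t\leq h-1$, and summing over $I=[s,s+L)$ gives
\[
 \sum_{t\in I}c_t=\sum_{t\in I}\bigl(d_t-\varepsilon_t(n)\bigr)+2c_s-2c_{s+L}.
\]

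If $k_i=0$, then $c_t\geq0$ yields $\sum_{t\in I}d_t\geq\sum_{j\neq i}\card{W_j\cap I}-(h-1)$. Pigeonhole over the $h$ summands, none of which equals $a$, finishes this case.

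If $k_i\geq1$, pick $j_0\neq i$ with $k_{j_0}=0$.
\begin{itemize}
\item Lower bound: on $W_{j_0}\cap I$ we have $d_t=0$ and $\varepsilon_t(n)=1$. Hence $c_t=1+2c_{t+1}\geq1$, and the left side is at least $\card{W_{j_0}\cap I}\geq L/h-2C$.
\item Upper bound: the right side is at most $\sum_{t\in W_i\cap I}d_t+\sum_{j\neq i}(k_j-1)\card{W_j\cap I}+2(h-1)$. Here $\sum_{j\neq i}(k_j-1)=1-k_i\leq0$, so the middle term is $O_h(C)$.
\end{itemize}
Hence $\sum_{t\in W_i\cap I}d_t\geq L/h-O_h(C+1)$. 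Some $B_i$-summand then has at least $h$ digits in $I$, and Lemma~\ref{lema8} applies with $\ell=i$.

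This would make your intuition rigorous and would treat $k_i=1$ and $k_i\geq2$ uniformly. The paper instead rules out $k_i=1$ by the parity observation $d_t\leq c_t$ on $W_i\cap I$, and handles $k_i\geq2$ using $c_t\geq0$ alone. As written, your proposal contains none of this quantitative argument, so the lemma is not yet proved.
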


\begin{proof}
For every interval $I=[s,s+L)\cap\mathbb Z$, where $s\geq0$ and $L\geq1$ are integers, put $m_j(I)=\card{W_j\cap I}$. By \eqref{eq42},
\begin{align}
 m_j(I)
 &=W_j(s+L)-W_j(s)
 \nonumber\\
 &=\frac Lh+e_j,
 \qquad
 |e_j|\leq2C.
 \label{eq43}
\end{align}
Let $D=2C$ and choose the integer
\begin{equation}\label{eq44}
 L=
 \left\lfloor
 \max\left\{
 h\bigl((2h+1)D+2(h-1)\bigr),
 h\bigl(2hD+h^2-1\bigr),
 \frac{hD}{h-1}+h(h+1)
 \right\}
 \right\rfloor+1.
\end{equation}
By this choice,
\begin{equation}\label{eq45}
 \frac Lh-D>2hD+2(h-1),
\end{equation}
\begin{equation}\label{eq46}
 \frac Lh-2hD-(h-1)>h(h-1),
\end{equation}
and
\begin{equation}\label{eq47}
 \frac{h-1}{h}L-D-(h-1)>h(h-1).
\end{equation}

Let
\[
 I=[s,s+L)\cap\mathbb Z,
 \qquad
 s>\max S(a),
\]
and suppose that
\[
 n\in\sum_{j=1}^hk_jB_j,
 \qquad
 (k_1,\ldots,k_h)\in\mathcal K_h\setminus\{(1,\ldots,1)\}.
\]
Since
\(\sum_{j=1}^hk_j=h\),
we may choose
\[
 b_{j,r}\in B_j
 \qquad
 (1\leq j\leq h,\ 1\leq r\leq k_j)
\]
such that
\begin{equation}\label{eq48}
 n=\sum_{j=1}^h\sum_{r=1}^{k_j}b_{j,r}.
\end{equation}
The summands $b_{j,r}$ may be $0$. Let
\[
 \mathcal P=
 \{(j,r):1\leq j\leq h,\ 1\leq r\leq k_j\}.
\]
Then
\begin{equation}\label{eq49}
 \card{\mathcal P}=\sum_{j=1}^hk_j=h.
\end{equation}

For $(j,r)\in\mathcal P$, since
\(b_{j,r}\in B_j=A(W_j)\cup\{0\}\)
we may write
\[
 b_{j,r}=\sum_{u\geq0}\varepsilon_u(b_{j,r})2^u,
\]
where
\begin{equation}\label{eq50}
 \varepsilon_u(b_{j,r})\in\{0,1\},
 \qquad
 \varepsilon_u(b_{j,r})=0\quad(u\notin W_j).
\end{equation}
Since
\(\Nzero=W_1\sqcup\cdots\sqcup W_h\),
for every $t\geq0$ there exists a unique
\[
 c(t)\in\{1,\ldots,h\}
\]
such that
\begin{equation}\label{eq51}
 t\in W_{c(t)}.
\end{equation}
Let
\begin{align}
 d_t
 &:=\sum_{j=1}^h\sum_{r=1}^{k_j}\varepsilon_t(b_{j,r})
 \nonumber\\
 &=\sum_{r=1}^{k_{c(t)}}\varepsilon_t(b_{c(t),r}).
 \label{eq52}
\end{align}
Then
\begin{align}
 0\leq d_t
 &\leq\sum_{r=1}^{k_{c(t)}}1
 =k_{c(t)}
 \leq\sum_{j=1}^hk_j
 =h.
 \label{eq53}
\end{align}
By \eqref{eq48},
\begin{align}
 n
 &=\sum_{j=1}^h\sum_{r=1}^{k_j}
   \sum_{u\geq0}\varepsilon_u(b_{j,r})2^u
 \nonumber\\
 &=\sum_{u\geq0}
   \left(\sum_{j=1}^h\sum_{r=1}^{k_j}\varepsilon_u(b_{j,r})\right)2^u
 \nonumber\\
 &=\sum_{u\geq0}d_u2^u.
 \label{eq54}
\end{align}
On the other hand, with the notation above, the standard binary expansion of $n$ is
\begin{equation}\label{eq55}
 n=\sum_{u\geq0}\varepsilon_u(n)2^u,
 \qquad
 \varepsilon_u(n)\in\{0,1\}.
\end{equation}
By the hypothesis of the lemma, for $t\in I$,
\begin{equation}\label{eq56}
 \varepsilon_t(n)=
 \begin{cases}
 0,&t\in W_i\cap I,\\
 1,&t\in I\setminus W_i.
 \end{cases}
\end{equation}

We now compare \eqref{eq54} and
\eqref{eq55} by binary carries. Put $c_0=0$ and, for
$t\geq0$, let
\[
 S_t=\sum_{u=0}^{t}d_u2^u,
 \qquad
 T_t=\sum_{u=0}^{t}\varepsilon_u(n)2^u.
\]
Both expansions represent $n$, so
\begin{equation}\label{eq57}
 S_t\equiv n\pmod{2^{t+1}},
\end{equation}
\begin{equation}\label{eq58}
 T_t\equiv n\pmod{2^{t+1}}.
\end{equation}
Since
\[
 S_t\geq0,
 \qquad
 0\leq T_t\leq2^{t+1}-1,
 \qquad
 S_t\equiv T_t\pmod{2^{t+1}},
\]
there exists a unique integer $c_{t+1}\geq0$ such that
\begin{equation}\label{eq59}
 \sum_{u=0}^{t}d_u2^u
 =
 \sum_{u=0}^{t}\varepsilon_u(n)2^u
 +2^{t+1}c_{t+1}.
\end{equation}
For $t\geq1$, substituting $t-1$ into
\eqref{eq59} gives
\begin{equation}\label{eq60}
 \sum_{u=0}^{t-1}d_u2^u
 =
 \sum_{u=0}^{t-1}\varepsilon_u(n)2^u
 +2^tc_t.
\end{equation}
Subtracting the two equalities,
\[
 d_t2^t
 =\varepsilon_t(n)2^t+2^{t+1}c_{t+1}-2^tc_t,
\]
and hence
\begin{equation}\label{eq61}
 d_t+c_t=\varepsilon_t(n)+2c_{t+1}
 \qquad(t\geq1).
\end{equation}
For $t=0$, the same identity follows from
\eqref{eq59} and $c_0=0$. Thus
\begin{equation}\label{eq62}
 d_t+c_t=\varepsilon_t(n)+2c_{t+1}
 \qquad(t\geq0).
\end{equation}

We next prove that the carries are uniformly bounded. We already have
\(c_0=0\leq h-1\).
If
\(0\leq c_t\leq h-1\),
then, by \eqref{eq62} and \eqref{eq53},
\begin{align*}
 0\leq2c_{t+1}
 &=d_t+c_t-\varepsilon_t(n)\\
 &\leq d_t+c_t\\
 &\leq h+(h-1)=2h-1.
\end{align*}
Since $c_{t+1}$ is an integer,
\[
 0\leq c_{t+1}
 \leq\left\lfloor\frac{2h-1}{2}\right\rfloor
 =h-1.
\]
By induction,
\begin{equation}\label{eq63}
\begin{gathered}
d_t+c_t=\varepsilon_t(n)+2c_{t+1},\\[2pt]
0\leq d_t\leq k_{c(t)},\qquad
0\leq c_t\leq h-1
\qquad(t\geq0).
\end{gathered}
\end{equation}

Summing
\eqref{eq63} over $t=s,\ldots,s+L-1$, we obtain
\begin{align*}
 \sum_{t=s}^{s+L-1}d_t
 +\sum_{t=s}^{s+L-1}c_t
 &=\sum_{t=s}^{s+L-1}\varepsilon_t(n)
 +2\sum_{t=s}^{s+L-1}c_{t+1}\\
 &=\sum_{t=s}^{s+L-1}\varepsilon_t(n)
 +2\sum_{t=s+1}^{s+L}c_t\\
 &=\sum_{t=s}^{s+L-1}\varepsilon_t(n)
 +2\sum_{t=s}^{s+L-1}c_t
 -2c_s+2c_{s+L}.
\end{align*}
Hence
\begin{equation}\label{eq64}
 \sum_{t\in I}c_t
 =\sum_{t\in I}(d_t-\varepsilon_t(n))
 +2c_s-2c_{s+L}.
\end{equation}

By \eqref{eq63} and $0\leq c_t\leq h-1$, we have
\begin{align*}
 \sum_{t\in I}(d_t-\varepsilon_t(n))
 &=\sum_{t=s}^{s+L-1}(2c_{t+1}-c_t)\\
 &=\sum_{t=s+1}^{s+L-1}c_t+2c_{s+L}-c_s\\
 &\geq-c_s\\
 &\geq-(h-1).
\end{align*}
Therefore,
\begin{equation}\label{eq72}
 \sum_{t\in I}d_t
 \geq\sum_{t\in I}\varepsilon_t(n)-(h-1).
\end{equation}

By \eqref{eq56},
\begin{equation}\label{eq73}
 \sum_{t\in I}\varepsilon_t(n)
 =\sum_{j\neq i}m_j(I).
\end{equation}

We distinguish three cases according to $k_i$.

\medskip
\noindent
\textbf{Case 1. $k_i=1$.} For $t\in W_i\cap I$, by \eqref{eq56} and
\eqref{eq53},
\[
 \varepsilon_t(n)=0,
 \qquad
 d_t\in\{0,1\}.
\]
By \eqref{eq63},
$d_t+c_t=2c_{t+1}$. If $d_t=0$, then $d_t\leq c_t$.
If $d_t=1$, then $c_t=2c_{t+1}-1\geq1=d_t$.
Thus
\begin{equation}\label{eq74}
 d_t\leq c_t
 \qquad(t\in W_i\cap I).
\end{equation}

By \eqref{eq64},
\begin{align*}
 &\sum_{t\in W_i\cap I}c_t
 +\sum_{t\in I\setminus W_i}c_t\\
 &=\sum_{t\in W_i\cap I}d_t
 +\sum_{t\in I\setminus W_i}d_t
 -\sum_{t\in I\setminus W_i}1
 +2c_s-2c_{s+L}.
\end{align*}
For $j\neq i$ and $t\in W_j\cap I$,
\(d_t\leq k_j\),
Hence
\[
 \sum_{t\in I\setminus W_i}d_t
 =\sum_{j\neq i}\sum_{t\in W_j\cap I}d_t
 \leq\sum_{j\neq i}k_jm_j(I).
\]
Moreover,
\[
 \sum_{t\in I\setminus W_i}1
 =\sum_{j\neq i}m_j(I),
\]
and
\[
 -2c_{s+L}\leq0,
 \qquad
 2c_s\leq2(h-1).
\]
Combining these with \eqref{eq74} and cancelling
$\sum_{t\in W_i\cap I}c_t$, we obtain
\begin{align}
 \sum_{t\in I\setminus W_i}c_t
 &\leq
 \sum_{j\neq i}(k_j-1)m_j(I)+2(h-1).
 \label{eq75}
\end{align}
Since $k_i=1$ and $\sum_{j=1}^hk_j=h$,
\begin{equation}\label{eq76}
 \sum_{j\neq i}k_j=h-1,
 \qquad
 \sum_{j\neq i}(k_j-1)=0.
\end{equation}
By \eqref{eq43}, write
\[
 m_j(I)=\frac Lh+e_j,
 \qquad |e_j|\leq D.
\]
Then \eqref{eq76} gives
\begin{align*}
 \sum_{j\neq i}(k_j-1)m_j(I)
 &=\frac Lh\sum_{j\neq i}(k_j-1)
 +\sum_{j\neq i}(k_j-1)e_j\\
 &=\sum_{j\neq i}(k_j-1)e_j\\
 &\leq D\sum_{j\neq i}|k_j-1|.
\end{align*}
Moreover, since
\[
 k_j\geq0,
 \qquad
 \sum_{j\neq i}k_j=h-1,
\]
we have
\begin{align*}
 \sum_{j\neq i}|k_j-1|
 &\leq\sum_{j\neq i}(k_j+1)\\
 &=\sum_{j\neq i}k_j+(h-1)\\
 &=2(h-1)<2h.
\end{align*}
Hence
\begin{equation}\label{eq77}
 \sum_{j\neq i}(k_j-1)m_j(I)\leq2hD.
\end{equation}
By \eqref{eq75} and \eqref{eq77},
\begin{equation}\label{eq78}
 \sum_{t\in I\setminus W_i}c_t
 \leq2hD+2(h-1).
\end{equation}

Since $k_i=1$, $\sum_{j\neq i}k_j=h-1$, but
$(k_1,\ldots,k_h)\neq(1,\ldots,1)$. Thus some index
$j_0\neq i$ satisfies $k_{j_0}=0$.
For $t\in W_{j_0}\cap I$, by
\eqref{eq53} and \eqref{eq56},
\[
 d_t=0,
 \qquad
 \varepsilon_t(n)=1.
\]
so \eqref{eq63} gives
\(c_t=1+2c_{t+1}\geq1\).
Therefore,
\begin{align}
 \sum_{t\in I\setminus W_i}c_t
 &\geq\sum_{t\in W_{j_0}\cap I}c_t
 \nonumber\\
 &\geq m_{j_0}(I)
 \nonumber\\
 &\geq\frac Lh-D.
 \label{eq79}
\end{align}
By \eqref{eq45},
\[
 \frac Lh-D>2hD+2(h-1).
\]
This contradicts \eqref{eq78} and \eqref{eq79}.
Therefore,
\begin{equation}\label{eq80}
 k_i\neq1.
\end{equation}

\medskip
\noindent
\textbf{Case 2. $k_i\geq2$.} By \eqref{eq53},
\begin{align*}
 \sum_{t\in I}d_t
 &=\sum_{t\in W_i\cap I}d_t
 +\sum_{j\neq i}\sum_{t\in W_j\cap I}d_t\\
 &\leq\sum_{t\in W_i\cap I}d_t
 +\sum_{j\neq i}k_jm_j(I).
\end{align*}
Therefore, by \eqref{eq72} and \eqref{eq73},
\begin{align*}
 \sum_{t\in W_i\cap I}d_t
 &\geq
 \sum_{j\neq i}m_j(I)-(h-1)
 -\sum_{j\neq i}k_jm_j(I)\\
 &=\sum_{j\neq i}(1-k_j)m_j(I)-(h-1).
\end{align*}
Moreover,
\[
 \sum_{j\neq i}(1-k_j)
 =(h-1)-(h-k_i)=k_i-1.
\]
Using
\[
 m_j(I)=\frac Lh+e_j,
 \qquad |e_j|\leq D,
\]
we obtain
\begin{align*}
 \sum_{j\neq i}(1-k_j)m_j(I)
 &=\frac Lh(k_i-1)
 +\sum_{j\neq i}(1-k_j)e_j\\
 &\geq\frac Lh(k_i-1)
 -D\sum_{j\neq i}|1-k_j|.
\end{align*}
Also,
\begin{align*}
 \sum_{j\neq i}|1-k_j|
 &\leq\sum_{j\neq i}(1+k_j)\\
 &=(h-1)+(h-k_i)\\
 &\leq2h.
\end{align*}
Therefore,
\begin{align}
 \sum_{t\in W_i\cap I}d_t
 &\geq
 (k_i-1)\frac Lh-2hD-(h-1)
 \nonumber\\
 &\geq
 \frac Lh-2hD-(h-1)
 \nonumber\\
 &>h(h-1),
 \label{eq81}
\end{align}
where the last step follows from \eqref{eq46}.

For $1\leq r\leq k_i$, let
\(q_r=\card{{S}(b_{i,r})\cap I}\).
By \eqref{eq52},
\begin{align*}
 \sum_{r=1}^{k_i}q_r
 &=\sum_{r=1}^{k_i}\sum_{t\in W_i\cap I}\varepsilon_t(b_{i,r})\\
 &=\sum_{t\in W_i\cap I}\sum_{r=1}^{k_i}\varepsilon_t(b_{i,r})\\
 &=\sum_{t\in W_i\cap I}d_t\\
 &>h(h-1).
\end{align*}
Since $k_i\leq h$, the inequality above gives
$\sum_{r=1}^{k_i}q_r>k_i(h-1)$. Thus there exists
$r_*$ such that
\begin{equation}\label{eq82}
 b:=b_{i,r_*}\in A_i,
 \qquad
 \card{{S}(b)\cap I}\geq h.
\end{equation}
Let
\(p_*=(i,r_*)\).
By \eqref{eq48}, \eqref{eq49},
\eqref{eq82}, and Lemma~\ref{lema8} (with
$\ell=i$), we obtain
in the case $k_i\geq2$,
\begin{equation}\label{eq83}
 n\in h(A\setminus\{a\}).
\end{equation}

\medskip
\noindent
\textbf{Case 3. $k_i=0$.} By \eqref{eq72} and \eqref{eq73},
\begin{align*}
 \sum_{t\in I}d_t
 &\geq\sum_{j\neq i}m_j(I)-(h-1)\\
 &=L-m_i(I)-(h-1).
\end{align*}
By \eqref{eq43},
\[
 m_i(I)=\frac Lh+e_i,
 \qquad |e_i|\leq D,
\]
and hence
\begin{align}
 \sum_{t\in I}d_t
 &\geq L-\left(\frac Lh+D\right)-(h-1)
 \nonumber\\
 &=\frac{h-1}{h}L-D-(h-1)
 \nonumber\\
 &>h(h-1),
 \label{eq84}
\end{align}
where the last step follows from \eqref{eq47}.

For every $p=(j,r)\in\mathcal P$, let
\(q_p=\card{{S}(b_p)\cap I}\).
By \eqref{eq52},
\begin{align*}
 \sum_{p\in\mathcal P}q_p
 &=\sum_{j=1}^h\sum_{r=1}^{k_j}
   \sum_{t\in I}\varepsilon_t(b_{j,r})\\
 &=\sum_{t\in I}
   \sum_{j=1}^h\sum_{r=1}^{k_j}\varepsilon_t(b_{j,r})\\
 &=\sum_{t\in I}d_t\\
 &>h(h-1).
\end{align*}
Since $|\mathcal P|=h$, the inequality above gives
$\sum_{p\in\mathcal P}q_p>|\mathcal P|(h-1)$. Thus
there exists $p_*=(j_*,r_*)\in\mathcal P$ such that
\begin{equation}\label{eq85}
 b:=b_{p_*}\in A_{j_*},
 \qquad
 \card{{S}(b)\cap I}\geq h.
\end{equation}
Since $k_i=0$,
\begin{equation}\label{eq86}
 j_*\neq i.
\end{equation}
For all $p=(j,r)\in\mathcal P$, we have $j\neq i$, and hence
\[
 b_p\in B_j=A_j\cup\{0\},\qquad
 (A_j\cup\{0\})\cap A_i=\varnothing,
 \qquad a\in A_i.
\]
Thus
\begin{equation}\label{eq87}
 \{p\in\mathcal P:b_p=a\}=\varnothing.
\end{equation}
By \eqref{eq48}, \eqref{eq49},
\eqref{eq85}--\eqref{eq87}, and
Lemma~\ref{lema8} (with $\ell=j_*$), in the case $k_i=0$ we obtain
\begin{equation}\label{eq88}
 n\in h(A\setminus\{a\}).
\end{equation}

Since $k_i\in\{0,1,\ldots,h\}$, the case $k_i=1$ is excluded by
\eqref{eq80}. By
\eqref{eq83} and \eqref{eq88}, in each of the other two cases we have
\(n\in h(A\setminus\{a\})\).
This completes the proof of the lemma.
\end{proof}

\begin{proof}[Proof of necessity]
Assume that $A$ is a strongly minimal asymptotic basis of order $h$. By Lemma~\ref{lema7}, fix a constant $C>0$ such that
\[
 \left|W_j(m)-\frac mh\right|\leq C
 \qquad(1\leq j\leq h,\ m\geq0).
\]
In particular,
\begin{equation}\label{eq89}
 W_i(m)=\frac mh+O(1)
 \qquad(1\leq i\leq h).
\end{equation}
In particular,
\[
 K(m)=\max_{1\leq j\leq h}W_j(m)
 =\frac mh+O(1).
\]

Suppose that \eqref{eq3} fails. Then there exist
\[
 i\in\{1,\ldots,h\},
 \qquad
 w\in W_i,
\]
and
\[
 (k_1,\ldots,k_h)
 \in\mathcal K_h\setminus\{(1,\ldots,1)\}
\]
such that, for every $t\geq1$,
\begin{equation}\label{eq90}
 \sum_{j=1}^hk_jP_j(t)
 \geq
 \begin{cases}
 \displaystyle\sum_{j\neq i}P_j(t),&t\leq w,\\[2mm]
 \displaystyle\sum_{j\neq i}P_j(t)+2^w,&t>w.
 \end{cases}
\end{equation}
Let
\[
 a=2^w.
\]
Then
\(a\in A_i, \qquad {S}(a)=\{w\}\).
For any $M>w$, let
\[
 N_M=2^w+\sum_{\substack{0\leq t<M\\t\notin W_i}}2^t.
\]
By \eqref{eq90} and \eqref{eq12}, for every
$M>w$ and $1\leq r\leq M$ we have
\[
 N_M\bmod2^r\leq\sum_{j=1}^hk_jP_j(r).
\]
Since $0<N_M<2^M$, the representation criterion
\eqref{eq11} gives
\begin{equation}\label{eq91}
 N_M\in\sum_{j=1}^hk_jB_j.
\end{equation}

Fix $m>w$ and let
\[
 \mathcal H_m
 =\{0\leq n<2^m:{S}(n)\cap W_i=\{w\}\}.
\]
Let
\[
 \Omega_m=([0,m)\cap\mathbb Z)\setminus W_i.
\]
Then
\(\card{\Omega_m}=m-W_i(m)\).
For every $S\subseteq\Omega_m$, let
\(n_S=2^w+\sum_{t\in S}2^t\).
As $S(n_S)=\{w\}\sqcup S$, distinct subsets of $\Omega_m$
give distinct integers $n_S$. Moreover,
$\mathcal H_m=\{n_S:S\subseteq\Omega_m\}$.
Hence
\begin{equation}\label{eq92}
 |\mathcal H_m|=2^{|\Omega_m|}=2^{m-W_i(m)}.
\end{equation}

For every $n\in\mathcal H_m$,
\begin{align*}
 {S}(n)
 &\subseteq
 \{w\}\sqcup\Omega_m\\
 &={S}(N_m).
\end{align*}
Applying \eqref{eq91} with $M=m$ and then Lemma~\ref{lema3}, we obtain
\begin{equation}\label{eq93}
 n\in\sum_{j=1}^hk_jB_j
 \qquad(n\in\mathcal H_m).
\end{equation}

Let $L=L(h,C)$ be the integer in Lemma~\ref{lema9}. Let
\[
 q=\left\lfloor\frac{m-w-1}{L}\right\rfloor.
\]
For sufficiently large $m$, we have $q\geq1$. Define
\[
 I_r=[w+1+(r-1)L,w+1+rL)\cap\mathbb Z
 \qquad(1\leq r\leq q).
\]
Then
\begin{align*}
 \card{I_r}&=L,\\
 I_r\cap I_s&=\varnothing\qquad(r\neq s),\\
 \min I_r&>w=\max S(a),\\
 I_r&\subseteq[0,m).
\end{align*}
For $1\leq r\leq q$, let
\[
 J_r=I_r\setminus W_i,
 \qquad
 \ell_r=\card{J_r}.
\]
Then
\[
 0\leq\ell_r\leq L,
 \qquad
 J_r\cap J_s=\varnothing\quad(r\neq s).
\]
For $n\in\mathcal H_m$, let $\mathcal C_r(n)$ mean that
$S(n)\cap J_r=J_r$. Since $S(n)\cap W_i=\{w\}$ and
$\min I_r>w$, all binary digits of $n$ on $W_i\cap I_r$
are $0$. If $\mathcal C_r(n)$ holds, all digits on
$J_r=I_r\setminus W_i$ are $1$. Thus
\begin{equation}\label{eq94}
 \varepsilon_t(n)=
 \begin{cases}
 0,&t\in W_i\cap I_r,\\
 1,&t\in I_r\setminus W_i.
 \end{cases}
\end{equation}
By \eqref{eq93},
\[
 n\in\sum_{j=1}^hk_jB_j,
 \qquad
 (k_1,\ldots,k_h)\neq(1,\ldots,1).
\]
Therefore, whenever $\mathcal C_r(n)$ holds, Lemma~\ref{lema9} together with
\eqref{eq94} gives
\begin{equation}\label{eq95}
 n\in h(A\setminus\{a\}).
\end{equation}

Now take
\(n\in E_a\cap\mathcal H_m\).
By $E_a=hA\setminus h(A\setminus\{a\})$,
\[
 n\notin h(A\setminus\{a\}).
\]
By \eqref{eq95}, $\mathcal C_r(n)$ fails for every
$1\leq r\leq q$. Consequently,
\begin{equation}\label{eq1}
 E_a\cap\mathcal H_m
 \subseteq
 \{n\in\mathcal H_m:
 \mathcal C_r(n)\text{ fails for every }1\leq r\leq q\}.
\end{equation}

For each $r$, the $\ell_r$ digits in $J_r$ admit $2^{\ell_r}$
assignments, exactly one of which satisfies $\mathcal C_r$. Hence
\[
 \#\{(\varepsilon_t)_{t\in J_r}:\mathcal C_r(n)\text{ fails}\}
 =2^{\ell_r}-1.
\]
Therefore,
\begin{align*}
 \frac{2^{\ell_r}-1}{2^{\ell_r}}
 &=1-2^{-\ell_r}\\
 &\leq1-2^{-L}.
\end{align*}
Let
\(J=\bigsqcup_{r=1}^qJ_r\).
Then
\begin{align*}
 \card J&=\sum_{r=1}^q\ell_r,\\
 \card{\Omega_m\setminus J}
 &=m-W_i(m)-\sum_{r=1}^q\ell_r.
\end{align*}
Therefore,
\[
\begin{aligned}
&\card{\{n\in\mathcal H_m:
\mathcal C_r(n)\text{ fails for every }1\leq r\leq q\}}\\
&\quad=2^{\card{\Omega_m\setminus J}}
\prod_{r=1}^q(2^{\ell_r}-1)\\
&\quad=2^{m-W_i(m)-\sum_{r=1}^q\ell_r}
\prod_{r=1}^q(2^{\ell_r}-1)\\
&\quad=2^{m-W_i(m)}\prod_{r=1}^q(1-2^{-\ell_r})\\
&\quad\leq2^{m-W_i(m)}(1-2^{-L})^q.
\end{aligned}
\]
By \eqref{eq1},
\begin{align}
 \card{E_a\cap\mathcal H_m}
 &\leq
 2^{m-W_i(m)}(1-2^{-L})^q
 \nonumber\\
 &=
 2^{m-W_i(m)}
 (1-2^{-L})^{\lfloor(m-w-1)/L\rfloor}.
 \label{eq96}
\end{align}

Let
\[
 \rho=1-2^{-L}\in(0,1),
 \qquad
 \theta=\rho^{1/L}\in(0,1).
\]
By
\[
 q=\left\lfloor\frac{m-w-1}{L}\right\rfloor
 \geq\frac{m-w-1}{L}-1,
\]
we obtain
\begin{align*}
 \rho^q
 &\leq
 \rho^{(m-w-1)/L-1}\\
 &=\rho^{-1}\theta^{m-w-1}\\
 &=O_{w,L}(\theta^m)\\
 &=o(1).
\end{align*}
By \eqref{eq89},
\begin{align*}
 m-W_i(m)
 &=\frac{h-1}{h}m+O(1),\\
 2^{m-W_i(m)}
 &\asymp2^{(h-1)m/h}.
\end{align*}
Substituting these two estimates into \eqref{eq96}, we obtain
\begin{equation}\label{eq97}
 \card{E_a\cap\mathcal H_m}
 =o\bigl(2^{(h-1)m/h}\bigr).
\end{equation}

On the other hand, by Lemma~\ref{lema6},
\[
 \card{(E_a\cap[0,2^m))\setminus\mathcal H_m}
 =o\bigl(2^{(h-1)K(m)}\bigr).
\]
Combining this with $K(m)=m/h+O(1)$ yields
\[
 2^{(h-1)K(m)}
 =2^{(h-1)m/h+O(1)}\asymp2^{(h-1)m/h},
\]
so that
\begin{equation}\label{eq98}
 \card{(E_a\cap[0,2^m))\setminus\mathcal H_m}
 =o\bigl(2^{(h-1)m/h}\bigr).
\end{equation}
Moreover, since
\[
 E_a\cap[0,2^m)
 =
 (E_a\cap\mathcal H_m)
 \sqcup
 \bigl((E_a\cap[0,2^m))\setminus\mathcal H_m\bigr),
\]
we have, by \eqref{eq97} and \eqref{eq98},
\begin{align}
 E_a(2^m-1)
 &=\card{E_a\cap[0,2^m)}
 \nonumber\\
 &=o\bigl(2^{(h-1)m/h}\bigr).
 \label{eq99}
\end{align}

On the other hand, strong minimality gives a constant $c_a>0$ such that, for all sufficiently large $m$,
\[
 E_a(2^m-1)
 \geq
 c_aA(2^m-1)^{h-1}.
\]
By \eqref{eq5} and \eqref{eq89},
$A(2^m-1)\asymp2^{m/h}$. Hence
\begin{align}
 E_a(2^m-1)
 &\gg_a A(2^m-1)^{h-1}
 \nonumber\\
 &\asymp2^{(h-1)m/h}.
 \label{eq100}
\end{align}
Therefore,
\[
 \frac{E_a(2^m-1)}{2^{(h-1)m/h}}
 \longrightarrow0,
\]
which contradicts
\[
 \frac{E_a(2^m-1)}{2^{(h-1)m/h}}
 \gg_a1.
\]
This contradiction establishes \eqref{eq3} for every
choice of $i$, $w$, and $(k_1,\ldots,k_h)$, and hence proves condition~(II).

Condition~(I) follows from \eqref{eq89}. 
\end{proof}

\end{document}